\def\ZZ{{\mathbb Z}}
\def\CC{{\mathbb C}}
\def\OO{{\mathcal O}}
\def\R{{\mathbf R}}
\def\L{\mathbf{L}}
\def\F{\mathcal{F}}
\def\E{\mathcal{E}}
\def\G{\mathcal{G}}
\def\H{\mathcal{H}}
\def\cP{\mathcal{P}}
\def\Pic0{\mathrm{Pic}^0}
\def\Aut0{\mathrm{Aut}^0}
\def\DD{{\mathbf{D}}}
\def\R{{\mathbf{R}}}
\def\*{{\underline *}}
\def\Supp{\mathrm{Supp}}
\def\cP{\mathcal{P}}
\def\cQ{\mathcal{Q}}
\def\cG{\mathcal{G}}
\def\Alb{\mathrm{Alb}\,}
\theoremstyle{plain}
\newtheorem{theorem}{Theorem}[subsection]
\newtheorem{proposition/example}[theorem]{Proposition/Example}
\newtheorem{proposition}[theorem]{Proposition}
\newtheorem{corollary}[theorem]{Corollary}
\newtheorem{lemma}[theorem]{Lemma}
\newtheorem{claim}[theorem]{Claim}
\theoremstyle{definition}
\newtheorem{definition}[theorem]{Definition}
\newtheorem{remark}[theorem]{Remark}
\newtheorem{conjecture/question}[theorem]{Conjecture/Question}
\newtheorem{remark/definition}[theorem]{Remark/Definition}
\newtheorem{notation/assumptions}[theorem]{Assumptions/Notation}
\numberwithin{equation}{section}
\theoremstyle{remark}
\begin{document}
\title{Irregular fibrations of derived equivalent varieties}
 \author[ F. Caucci and L. Lombardi]{ Federico Caucci and  Luigi Lombardi}

\address{F. Caucci \\Dipartimento di Matematica, Universit\`a  di Roma Tor Vergata, Via della Ricerca
Scientifica 1, 00133 Roma, Italy}
 \email{caucci@mat.uniroma2.it }
\thanks{FC was partially supported by the ERC Consolidator Grant
ERC-2017-CoG-771507-StabCondEn, by the MUR Excellence Department
Project MatMod@TOV (2023-2027) awarded to the Department of Mathematics
of the University of Rome Tor Vergata, and by by the PRIN 2022 ``Moduli
spaces and Birational Geometry''. He is a member of GNSAGA-INdAM}

\address{L. Lombardi\\Dipartimento di Matematica ``Federigo Enriques'', 
             Universit\`a degli Studi di Milano, via Cesare Saldini 50, 20133 Milano, Italy}
\email{luigi.lombardi@unimi.it}
\thanks{LL was  partially supported by GNSAGA-INdAM, PSR Linea 4 of Universit\`a degli Studi di Milano, PRIN 2020: ``Curves, Ricci flat Varieties and their Interactions'' and PRIN 2022: ``Symplectic varieties: their interplay with Fano manifolds and derived categories''.}

\begin{abstract}
  We study the behavior of   irregular fibrations of a variety under derived equivalence of its bounded derived category. 
In particular we prove the derived invariance of the existence of  an irregular fibration over a variety of general type,  extending 
the case of irrational pencils onto curves of genus $g\geq 2$. 
We also prove that  a derived equivalence of such fibrations induces a derived equivalence between their general fibers.
\end{abstract}

\maketitle

\section{Introduction}\label{I0}

In this paper we investigate the invariance of
  irregular fibrations under derived equivalence. 
	An \emph{irregular fibration} is  a surjective morphism with connected fibers
from a smooth projective variety 
 onto a normal projective variety of positive dimension 
 admitting  a desingularization of maximal Albanese dimension.\footnote{This means that the Albanese map of this smooth model is generically finite onto its image. This property does not depend on the chosen desingularization.}
Two  smooth  projective  complex varieties $X$ and $Y$ are  \emph{derived equivalent} if there exists an  equivalence  of triangulated categories
$\Phi \colon \DD(X) \xrightarrow{\sim} \DD(Y)$  between their bounded derived categories of coherent sheaves. 
  The   theorem below is the main result of the paper. It concerns the derived invariance of irregular fibrations $f\colon X \to V$
    onto varieties  of general type, \emph{i.e.} such that  one (and hence any) resolution of singularities of $V$ is of general type. 
    These fibrations can  be regarded as a higher-dimensional 
analogue of the notion of irrational pencils over smooth curves of genus $g\geq 2$.  
  It turns out  that the mere existence of an irregular  fibration 
  imposes quite strong  restrictions  on the geometry of  Fourier--Mukai partners.
  
		\begin{theorem}\label{Thmfibrintro}
		Suppose $\DD(X) \simeq \DD(Y)$ and  that  
		$X$ carries an irregular fibration $f \colon X \to V$ such that  $V$  is of general type. Then:
\begin{itemize}
\item[(i)] 	
 $Y$  admits an irregular fibration $h \colon Y \to W$  
		such that $W$  is birational to $V$;  \\
		
\item[(ii)]		The general fibers of $f$ and $h$ are derived equivalent; \\
		
\item[(iii)]		If the (anti)canonical  line bundle of the general fiber of $f$ is big,
 then  $X$ and $Y$ are $K$-equivalent.

\end{itemize}		
		\end{theorem}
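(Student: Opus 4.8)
The plan is to convert the fibration $f \colon X \to V_X$ into data on the Picard variety $\Pic0(X)$ that is visibly preserved by a derived equivalence, to transport that data to $Y$ through Rouquier's isomorphism $\Aut0(X) \times \Pic0(X) \xrightarrow{\sim} \Aut0(Y) \times \Pic0(Y)$, and to reconstruct a fibration from it; the refinements (ii)--(iv) then follow by upgrading the global equivalence $\Phi$ to a relative one over a common base and applying reconstruction and $K$-equivalence theorems to the fibers.

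For (i), I would first recall that, because $V_X$ is of general type with a resolution $V_X'$ of maximal Albanese dimension, the pullback $f^*\Pic0(V_X') \subseteq \Pic0(X)$ is an abelian subvariety that is cut out (up to torsion translation) by a component of the cohomological support loci $V^i(\omega_X) = \{P \in \Pic0(X) : H^i(X, \omega_X \otimes P) \neq 0\}$ of the expected codimension; here the general-type hypothesis is exactly what the Ein--Lazarsfeld/Chen--Hacon structure theory needs in order to guarantee that this component genuinely records a fibration and is not invariant under translation by a positive-dimensional subtorus. Since these loci are derived invariant and compatible with Rouquier's isomorphism, one extracts the corresponding abelian subvariety of $\Pic0(Y)$ and runs the Green--Lazarsfeld/Chen--Hacon construction in reverse to produce $h \colon Y \to V_Y$ onto a general-type base of maximal Albanese dimension. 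The birationality $V_Y \sim V_X$ I would obtain by identifying each $V$ with the image of the map attached to that support-locus component (equivalently, with the corresponding canonical model), so that the matching of the data forces the two images to coincide birationally.

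For (ii), the key is to show that the Fourier--Mukai kernel $\mathcal{E} \in \DD(X \times Y)$ realizing $\Phi$ is set-theoretically supported on the fiber product $X \times_B Y$, where $B$ is an abelian variety (isogenous to $\Alb(V_X)$) through which both $f$ and $h$ factor after Stein factorization; this uses the compatibility of $\Phi$ with the $\Pic0$-action established above. Restricting $\mathcal{E}$ to a general fiber of $X \to B$ then yields a kernel on the product of the corresponding fibers of $f$ and $h$, and checking that this restricted kernel remains fully faithful and essentially surjective gives the asserted derived equivalence of general fibers. For (iii) and (iv) I would feed (ii) into two classical reconstruction principles: because the Serre functor, and hence the graded (anti)canonical algebra of a fiber, is intrinsic to its derived category, the hypothesis that $\omega_F$ or $\omega_F^{-1}$ is big makes the (anti)canonical model of the general fiber $F$ a derived invariant; combining this fiberwise statement with the birational matching of bases from (i) identifies the relative (anti)canonical models of $X$ and $Y$, which yields the $K$-equivalence of (iii). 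When moreover $\omega_X$ or $\omega_X^{-1}$ is $f$-ample the general fibers are (anti)canonically polarized, so Bondal--Orlov forces the derived equivalent fibers to be isomorphic and $X$, $Y$ to already equal their relative (anti)canonical models; assembling the fiberwise isomorphisms compatibly over the common base upgrades this to $X \simeq Y$.

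The main obstacle is the transport step underlying (i) and (ii): Rouquier's isomorphism genuinely mixes the $\Aut0$ and $\Pic0$ factors --- for an abelian variety it even interchanges them --- so the crux is to prove that the abelian subvariety encoding a general-type fibration lands in, and is undistorted by, the $\Pic0(Y)$ summand, and then that the transported support-locus data reconstructs a base birational to $V_X$ rather than merely one with isogenous Albanese variety. Controlling the support of the kernel $\mathcal{E}$ precisely enough to restrict it to fibers, and verifying that the restriction stays an equivalence, is the technical heart of (ii).
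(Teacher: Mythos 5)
Your outline reproduces the high-level strategy (transport an abelian subvariety of $\Pic0 X$ through the Rouquier isomorphism, control the support of the kernel, restrict to fibers, then run Kawamata/Bondal--Orlov), but at two points it leans on statements that are either open or genuinely insufficient. For (i), you assert that the cohomological support loci ``are derived invariant and compatible with Rouquier's isomorphism'' and propose to run the Green--Lazarsfeld/Chen--Hacon structure theory \emph{in reverse} on $Y$. The two-sided invariance of the higher loci $V^i(X,\omega_X)$ --- matching components with components of $V^i(Y,\omega_Y)$ --- is precisely the open conjecture \cite[Conjecture 11]{lombardipopa:conj}; only the one-sided statement is known, namely that the components through the origin are sent into $\{{\rm id}_Y\}\times \Pic0 Y$. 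That weaker statement is what the paper actually uses (Lemma \ref{lemmagentypeRunv}): via Koll\'ar's decomposition, the Brion/Popa--Schnell orbit argument, and Chen--Hacon's theorem that $V^0(\widetilde V,\omega_{\widetilde V})$ generates $\Pic0 \widetilde V$ when $\widetilde V$ is of general type, it shows only that $\widehat{B_V}=\rho_*\widetilde f^*\Pic0\widetilde V$ is Rouquier-stable. With just this, on the $Y$ side you obtain a morphism $b_Y$ and its Stein factorization $s_Y\colon Y\to Y'$, but no support-locus information about $\omega_Y$ whatsoever; nothing in your argument forces $Y'$ to be positive-dimensional, of general type, or birational to $V_X$. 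The paper closes exactly this gap with the kernel-support theorem (Theorem \ref{step1}, from \cite{calopa:forth}): $(s_X\times s_Y)(\Supp(\E))$ is the graph of an isomorphism $\psi\colon Y'\xrightarrow{\sim} X'$, and $X'$ is birational to $V_X$ because $a_{\widetilde V}$ is generically finite. The same refinement is needed for your (ii): your containment $\Supp(\E)\subseteq X\times_B Y$ over the abelian variety $B$ is too coarse, since a fiber of $b_X$ is a disjoint union of $\deg(b_X')$ fibers of $f$, so ``the corresponding fibers'' is undefined without $\psi$; and the step you yourself flag as the technical heart (the restricted kernel stays an equivalence) is supplied in the paper by Claims \ref{subclaim:adj} and \ref{rel0}: Lieblich--Olsson's pushforward-from-a-thickening theorem to write $\E|_{U_X\times U_Y}\simeq l_*\mathcal C$, the adjunction isomorphism $i^*\E\simeq \R j_* j^* i^*\E$, a composition-of-kernels computation, and the fiberwise criterion of \cite{lrs:relative}.

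For (iii) and (iv) your fiberwise route would fail to globalize. Granting that derived-equivalent general fibers have isomorphic (anti)canonical models and are $K$-equivalent, identifying relative (anti)canonical models over the common base $W$ yields at best a birational map $X\dashrightarrow Y$; $K$-equivalence of the total spaces is strictly stronger, because the discrepancy divisor on a common resolution can be supported entirely over special points of $W$, where general-fiber information says nothing --- neither $X$ nor $Y$ is minimal here, so ``birational plus fiberwise big canonical'' does not give $p_X^*\omega_X\simeq p_Y^*\omega_Y$. Likewise, general fibers being isomorphic via Bondal--Orlov does not assemble into $X\simeq Y$: the isomorphism must be produced over all of $W$, including the special fibers your argument never sees. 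The paper's proof is global from the start: it takes an irreducible component $Z\subseteq \Supp(\E)$ dominating $X$, which by the support theorem commutes over $W$; Kodaira's lemma on a general fiber shows $\pi_Y|_Z$ is generically finite, hence $\dim Z=\dim X$, and Kawamata's kernel argument (using $p_X^*\omega_X\simeq p_Y^*\omega_Y$ along $Z$, \cite[Lemma 6.6]{huybrechts:fouriermukai}) gives $K$-equivalence of $X$ and $Y$; (iv) then follows by showing the induced birational map is a morphism, since $f$-ampleness rules out curves contracted by $\pi_Y$ but not by $\pi_X$, and a crepant birational morphism of smooth varieties is an isomorphism. So the missing ideas are concrete: the isomorphism of the finite parts of the two Stein factorizations via $\Supp(\E)$, and the relative-Kawamata argument on a component of $\Supp(\E)$ rather than fiber-by-fiber reconstruction.
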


Recall that two varieties $X$ and $Y$ are  \emph{$K$-equivalent} if there exists a third smooth projective variety $Z$ and birational morphisms
\[
\xymatrix{
     &Z \ar[dl]_{p_X} \ar[dr]^{p_Y}  & \\
X  & &Y 
}
\]
such that $p_X^*\omega_X \simeq p_Y^*\omega_Y$. An important aspect of the $K$-equivalence relation 
is that it preserves 
the   Hodge numbers, as proved by Kontsevich's motivic integration theory.

 Theorem \ref{Thmfibrintro} provides  generalizations of several previously known results. 
First of all  
$(iii)$ above 
should be seen as a relative version of
 Kawamata's birational reconstruction theorem \cite{kawamata:dequivalence}.\footnote{Actually, by 
 the first works of Kawamata, Koll\'ar and     Viehweg on the Iitaka Conjecture,
if in Theorem \ref{Thmfibrintro} $(iii)$  the general fiber of $f$ is of general type, then $X$ itself is 
of general type  because $V_X$ is so (see, \emph{e.g.}, \cite[Theorem 1.2.9 and Problem 1.1.2]{fujino:iitaka}). 
However, we do not use this result here. Our approach  is self-contained and 
moreover it also works  \emph{verbatim} when the anticanonical line bundle of the general fiber of $f$ is big.}
Moreover,  it
    generalizes \cite[Theorem 6]{lombardipopa:conj} where the case
  of irregular fibrations onto curves of genus $g\geq 2$ was considered.

If one restricts to somewhat more specific irregular fibrations, one obtains a stronger result. Namely, assume that, beyond being of general type,  $V$ admits a morphism $c_{V} \colon V \to \Alb \widetilde{V}$ which is finite onto its image and such that the composition $\widetilde{V} \to V \xrightarrow{c_{V}} \Alb \widetilde{V_X}$ equals the Albanese map of a desingularization $\widetilde{V}$ 
(\emph{cf}. \cite[\S 3.2]{lombardi:fibrations}). 
Note that this is precisely what happens when $\dim V = 1$. 
\begin{proposition}\label{Propfibrintro}
Let $\DD(X) \simeq \DD(Y)$. 
Under the above assumption, if $\omega_X$ or $\omega_X^{-1}$ is $f$-ample, then $X$ is isomorphic to $Y$.
\end{proposition}

We now present the second main result of the paper which generalizes  \cite[Theorem 1]{lombardi:fibrations}.
We say that 
two irregular  fibrations
 $f_1 \colon X \to V_1$  and $f_2 \colon X \to V_2$ of a variety $X$  are \emph{equivalent} if there exists a birational map 
 $\sigma \colon V_1 \dashrightarrow  V_2$ 
such that $f_2=\sigma \circ f_1$.

\begin{theorem}\label{introbij}
Suppose $\DD(X) \simeq \DD(Y)$.
There exists a base-preserving bijection between  the sets of equivalence classes of irregular fibrations of $X$ and   $Y$ onto 
normal projective varieties  of general type. 
\end{theorem}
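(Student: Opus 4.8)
The plan is to promote the existence statement of Theorem \ref{Thmfibrintro}(i) into a genuine bijection by producing, from the fixed equivalence $\Phi\colon \DD(X)\xrightarrow{\sim}\DD(Y)$, maps in both directions and showing they are mutually inverse. Write $\mathrm{IF}(X)$ (resp. $\mathrm{IF}(Y)$) for the set of equivalence classes of irregular fibrations of $X$ (resp. $Y$) onto normal projective varieties of general type. Applying Theorem \ref{Thmfibrintro}(i) to $\Phi$ sends a class $[f\colon X\to V_X]$ to a class $[h\colon Y\to V_Y]$ with $V_Y$ birational to $V_X$; applying it to $\Phi^{-1}\colon \DD(Y)\xrightarrow{\sim}\DD(X)$ yields a map $\Psi'\colon \mathrm{IF}(Y)\to \mathrm{IF}(X)$ in the opposite direction, giving $\Psi\colon \mathrm{IF}(X)\to \mathrm{IF}(Y)$. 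The claimed bijection, together with its base-preserving property, will follow once I verify that (a) these assignments descend to equivalence classes, and (b) the two are inverse to one another.

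For well-definedness, the point is that the construction of $h$ does not see the individual morphism $f$, but only the derived-invariant datum it determines. An irregular fibration onto a general type variety of maximal Albanese dimension determines, through pullback of holomorphic one-forms and the universal property of the Albanese, a subtorus of $\Pic0(X)$---equivalently an abelian quotient of $\Alb(X)$---together with the relevant positive-dimensional component of the cohomological support locus $V^0(\omega_X)\subset \Pic0(X)$. Two equivalent fibrations, related by $f_2=\sigma\circ f_1$ with $\sigma$ a birational map of their bases, factor through a common base up to birational modification, hence determine the same subtorus and the same support-locus component. Since the recipe underlying Theorem \ref{Thmfibrintro}(i) uses only this intrinsic data, equivalent fibrations are carried to equivalent fibrations, so $\Psi$ and $\Psi'$ are well defined.

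To see that $\Psi'\circ\Psi=\mathrm{id}$ and $\Psi\circ\Psi'=\mathrm{id}$, I would factor both through a single derived-invariant object. The equivalence $\Phi$ induces, via the Rouquier isomorphism together with the invariance of the loci $V^i(\omega)$, an isomorphism $\Pic0(X)\cong \Pic0(Y)$ under which $V^0(\omega_X)$ corresponds to $V^0(\omega_Y)$; by functoriality of Rouquier's construction, $\Phi^{-1}$ induces the \emph{inverse} isomorphism. The content I must then establish is that, on a single variety, passing to the subtorus/component datum defines a bijection between $\mathrm{IF}(X)$ and an intrinsic set of ``admissible'' components of $V^0(\omega_X)$---namely those cutting out a fibration whose base is of general type---and that the corresponding decoding, reconstructing the fibration and its base up to equivalence, is canonical. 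Granting this, $\Psi$ is the composite of the encoding on $X$, the induced identification of admissible components, and the decoding on $Y$, while $\Psi'$ is the same composite read backwards; hence they are mutually inverse. Because $V_Y$ is recovered up to birational equivalence from the transported component and its Albanese data, the bijection is automatically base-preserving.

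The main obstacle is step (b): establishing the canonical bijection, on a fixed variety, between equivalence classes of irregular fibrations onto general type varieties and admissible components of the cohomological support locus, and---crucially---checking that the fibration reconstructed on $Y$ from the transported component genuinely agrees, up to equivalence, with the one produced by Theorem \ref{Thmfibrintro}(i). This is delicate precisely because $\Phi$ acts only at the level of $\Pic0$ and cohomology, not geometrically, so one must argue that the general type base and its fibration are intrinsically pinned down by this Hodge-theoretic data and that the two descriptions of $\Psi$ coincide. Here the general type hypothesis on the base is what rigidifies the situation: it forces the associated component to be maximal, i.e.\ not further fibered, upgrading the correspondence between fibrations and components from a surjection to a genuine bijection, and it is this rigidity that ultimately powers both the well-definedness and the invertibility.
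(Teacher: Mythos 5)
Your high-level skeleton (apply Theorem \ref{Thmfibrintro}(i) to $\Phi$ and to $\Phi^{-1}$, then use functoriality of the Rouquier isomorphism to show the two maps are mutually inverse) is the same as the paper's, but the step you yourself flag with ``granting this'' is exactly where the proof has to happen, and your proposed route to it contains two concrete errors. First, there is in general \emph{no} isomorphism $\Pic0 X \cong \Pic0 Y$ induced by the Rouquier isomorphism: $\varphi$ need not respect the factors of $\Aut0 \times \Pic0$ (the Fourier--Mukai--Poincar\'e transform already violates this), and the entire machinery of Rouquier-stable subvarieties exists precisely because this global identification fails. What is available is only the matching of the loci $V^0(\omega^{\otimes m})$ inside the Rouquier group, \eqref{invV0}, plus the one-directional inclusion \eqref{nonvanloci} for components through the origin. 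Second, your encoding is aimed at the wrong locus: for a fibration $f \colon X \to V$ with $k$-dimensional general fiber, Koll\'ar's decomposition places $\rho_*\widetilde f^* V^0(\widetilde V, \omega_{\widetilde V})$ inside $V^k(X,\omega_X)$, not $V^0(\omega_X)$ (indeed $V^0(X,\omega_X)$ may miss $f^*\Pic0$ entirely, e.g.\ when $f_*\omega_X = 0$), and for $V^k$ with $k>0$ no equality analogous to \eqref{invV0} is known --- the paper explicitly notes this is conjectural even for $V^1(\omega_X)_0$. So the ``derived-invariant set of admissible components'' you want to factor through is not known to be derived invariant, and the posited bijection between equivalence classes of fibrations and such components is not established anywhere (nor proved by you; the ``maximality/rigidity'' remark at the end is not an argument).

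The paper avoids your dictionary altogether: it transports the \emph{subtorus} $\widehat{B_V}$ directly, proving it Rouquier-stable (Lemma \ref{lemmagentypeRunv}: Chen--Hacon's generation of $\Pic0 \widetilde V$ by $V^0(\widetilde V,\omega_{\widetilde V})$ for $\widetilde V$ of general type, pushed into $V^k(X,\omega_X)$ by Koll\'ar and then into $\ker(g^*)_0$, which is Rouquier-stable by the Popa--Schnell/Brion analysis), and defines $\mu_\Phi(f)$ as the Stein factorization $s_Y$ of the morphism induced by $\varphi(\widehat{B_V})$. The invertibility is then not abstract but a computation you never carry out: using Theorem \ref{step1}(ii) and diagram \eqref{construction} one identifies $B_Y \cong \Alb \widetilde W$ and shows the composition $W \to B_Y \to \Alb\widetilde W$ is the Albanese map of a resolution of $W$, whence the subtorus attached to the new fibration $h$ satisfies $\widehat{B_W} = \varphi(\widehat{B_V})$; only then does functoriality of $\varphi$ give $\mu_{\Phi^{-1}}(\mu_\Phi(f)) = s_X$, which is equivalent to $f$ by the construction in Theorem \ref{chiposi}. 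Well-definedness on equivalence classes is also immediate in the paper's setup, since $\widehat{B_V}$ is independent of the non-singular representative, whereas in your setup it would hinge on the unproven component dictionary. In short: the strategy is right, but the central compatibility (``the fibration reconstructed on $Y$ agrees with the one produced by Theorem \ref{Thmfibrintro}(i)'') is asserted rather than proved, and the Hodge-theoretic transport you propose to prove it with does not exist in the stated form.
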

 We refer the reader to Theorem \ref{thm:corr} for the proof of Theorem \ref{introbij} and  
a more precise statement. See also Remark \ref{lastrmk}.

To \emph{any} fibration $f	\colon X \to V$ onto a normal projective variety $V$ there 
 is attached an abelian subvariety $\widehat{B_V}$ of $\Pic0 X$  as follows. 
Let $\widetilde{f} \colon \widetilde X \to \widetilde V$ be a \emph{non-singular representative} of  
the  fibration $f$, namely   a commutative diagram
\begin{equation}\label{diagr:Vintro}
  \xymatrix{\widetilde X \ar[d]^-{\widetilde f}\ar[r]^{\rho} & X  \ar[d]^-{f}\\
\widetilde{V}\ar[r]^{\theta} & V}  
\end{equation}
where $\rho$ and $\theta$ are birational morphisms from smooth projective varieties and $\widetilde f$ is a fibration.
By noting that the push-forward  map $\rho_* \colon \Pic0 \widetilde X \to \Pic0 X$  is an isomorphism, 
we define  the abelian subvariety
\begin{equation*}
\widehat{B_V}  \;  \stackrel{{\rm def} }{=}  \; \rho_*\widetilde f^* \Pic0 \widetilde V \; \subset \; \Pic0 X.
\end{equation*}
It is easy to check that $\widehat{B_V}$ is well-defined, \emph{i.e.}, it does not depend on the choice of the non-singular representative.  
What happens is 
that, if $\widetilde V$ is of general type and of maximal Albanese dimension as in Theorem \ref{Thmfibrintro}, then  $\widehat{B_V}$ is a \emph{Rouquier-stable subvariety} with respect to any  exact equivalence $\DD(X) \simeq \DD(Y)$ (\emph{cf}. Lemma \ref{lemmagentypeRunv}).
The notion of  Rouquier-stable subvarieties was  
introduced  in \cite{calopa:forth} in order to study the derived invariance of certain relative canonical rings. 
Briefly, it refers to abelian subvarieties  $\widehat{B_X} \subset \Pic0 X$ that are mapped isomorphically  via the Rouquier isomorphism \eqref{intro-rouquier}
to abelian subvarieties $\widehat{B_Y}$ of $\Pic0 Y$.
We  refer the reader to \S\S \ref{sectionRunmixed} and \ref{F0} for the definition and  main properties of Rouquier-stable subvarieties. 
The above fact allows us to apply the general results of \S \ref{F0} to the setting  of 
 Theorems \ref{Thmfibrintro} and \ref{introbij}. 
The proof of $(ii)$  also builds  on the latest relativization technique  for the kernel \cite{lieblich+olsson:2022}.

Finally, we note that in fact we   prove  slightly more general results than those of 
Theorem \ref{Thmfibrintro},  although in a little less geometric settings
(see Theorems \ref{chiposi} and \ref{relkawintro}). 
For instance, we record the following particular case of Theorem \ref{relkawintro}.
\begin{corollary}\label{corintro}
Let $X$ and $Y$ be derived equivalent  varieties. Assume   
that $\chi(X, \OO_X) \neq 0$. 
If  the (anti)canonical line bundle of the general fiber of the Albanese map of $X$ is big, then $X$ and $Y$ are $K$-equivalent. 
\end{corollary}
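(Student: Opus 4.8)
The plan is to treat the Albanese map of $X$ as the irregular fibration attached to the whole of $\Pic0 X$, and then to repeat the relative reconstruction behind Theorem~\ref{Thmfibrintro}(iii); the one point that must be argued differently is the Rouquier-stability of the attached subvariety, which here cannot be read off from general type—the Albanese image of $X$ need not be of general type—but has to be produced from the hypothesis $\chi(X,\OO_X)\neq 0$. Write $a\colon X\to\Alb X$ for the Albanese map. Since $a^*\colon\Pic0(\Alb X)\to\Pic0 X$ is an isomorphism, the abelian subvariety attached to $a$ in the sense of the introduction is all of $\Pic0 X$, and the general fiber of $a$ is exactly the variety whose (anti)canonical bundle is assumed big.

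The heart of the matter is to show that $\Pic0 X$ is a Rouquier-stable subvariety for the given equivalence $\Phi\colon\DD(X)\xrightarrow{\sim}\DD(Y)$. I would extract this from $\chi\neq 0$ as follows. The holomorphic Euler characteristic is constant along the connected family $\{\OO_X\otimes\alpha\}_{\alpha\in\Pic0 X}$, so $\chi(X,\OO_X\otimes\alpha)=\chi(X,\OO_X)\neq 0$ for every $\alpha$; consequently no $\alpha$ annihilates all cohomology, and writing $V^i(\OO_X):=\{\alpha\in\Pic0 X\mid H^i(X,\OO_X\otimes\alpha)\neq 0\}$, the support loci cover the entire group: $\bigcup_i V^i(\OO_X)=\Pic0 X$. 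Now I would appeal to the behaviour of these cohomological support loci under the Rouquier isomorphism \eqref{intro-rouquier}, as developed in \S\ref{F0} and \cite{calopa:forth}: the abelian subvariety generated by $\bigcup_i V^i(\OO_X)$ is Rouquier-stable, being carried into $\Pic0 Y$ with no component along $\Aut0 Y$. Since this generated subvariety is all of $\Pic0 X$, the Rouquier isomorphism sends $\{\mathrm{id}\}\times\Pic0 X$ isomorphically onto $\{\mathrm{id}\}\times\Pic0 Y$, i.e. $\Pic0 X$ is Rouquier-stable.

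Granting this, the conclusion follows exactly as in Theorem~\ref{Thmfibrintro}. Rouquier-stability of $\Pic0 X$ forces the Albanese map of $Y$ to be the matching fibration $h$, and by the kernel relativization of \cite{lieblich+olsson:birational} it makes $\Phi$ restrict to a Fourier--Mukai equivalence between the general fibers of $a$ and of $h$, as in part (ii). Since the general fiber carries a big (anti)canonical bundle, Kawamata's birational reconstruction theorem \cite{kawamata:dequivalence} renders these fibers $K$-equivalent; assembling this over the base—using the isomorphism of relative (anti)canonical rings of \cite{calopa:forth}, precisely as in the proof of Theorem~\ref{Thmfibrintro}(iii)—yields the $K$-equivalence of $X$ and $Y$. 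The step I expect to be the real obstacle is the second paragraph: one needs the sharp form of the derived invariance of the support loci—that, through the Rouquier isomorphism, they remain in the $\Pic0$-direction—so that the purely numerical covering $\bigcup_i V^i(\OO_X)=\Pic0 X$ furnished by $\chi\neq 0$ can be upgraded to the stability of the entire abelian variety $\Pic0 X$.
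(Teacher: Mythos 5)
Your route to Rouquier-stability of $\Pic0 X$ is sound, and it is genuinely different from the paper's. The paper gets stability in one stroke: $\chi(X,\OO_X)\neq 0$ implies that $\Aut0 X$ is an affine algebraic group by \cite[Corollary 2.6]{popa+schnell:invariance}, and any morphism from the abelian variety $\Pic0 X$ to an affine group is constant, so $\varphi\big(\{{\rm id}_X\}\times \Pic0 X\big)=\{{\rm id}_Y\}\times\Pic0 Y$ (Subsection \ref{subs:aut}); the Corollary is then simply declared a special case of Theorem \ref{relkawintro} in the remark following it. Your alternative, via deformation-invariance of $\chi$ and the resulting covering $\bigcup_i V^i(X,\OO_X)=\Pic0 X$, reaches the same conclusion but contains one imprecision: the invariance statement \eqref{nonvanloci} concerns only $V^i(\,\cdot\,)_0$, the components through the origin, so ``the abelian subvariety generated by $\bigcup_i V^i(\OO_X)$ is Rouquier-stable'' is not literally what is available. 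The fix is immediate: $\Pic0 X$ is irreducible and is covered by finitely many closed subsets $V^i(X,\OO_X)$, $0\leq i\leq \dim X$, hence $V^i(X,\OO_X)=\Pic0 X$ for some $i$; that single locus passes through the origin, and \eqref{nonvanloci} with $j=0$, $m=0$ applies. With that repair your second paragraph is a correct, self-contained substitute for the affineness argument.

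The genuine gap is in your final paragraph. You propose: general fibers derived equivalent (part (ii)) $\Rightarrow$ general fibers $K$-equivalent by Kawamata $\Rightarrow$ ``assemble over the base'' via relative canonical rings. This is not how the proof of Theorem \ref{Thmfibrintro}(iii)/Theorem \ref{relkawintro} runs, and the assembly step fails as stated: $K$-equivalence of each fiber pair is realized by some roof $Z_w$ depending on $w$, with no compatibility as $w$ varies and no control on the base-directional parts of $\omega_X$ and $\omega_Y$; nothing glues these roofs into a single $Z$ with $p_X^*\omega_X\simeq p_Y^*\omega_Y$. (The relative canonical rings of \cite{calopa:forth} are mentioned in the paper only as motivation and play no role in this proof.) The actual argument in \S\ref{proofkawgen} is global and bypasses part (ii) entirely: from $\Supp(\E)\subseteq X\times_W Y$ one picks an irreducible component $Z$ of $\Supp(\E)$ surjecting onto $X$; the commutativity over $W$, the bigness of the fiberwise (anti)canonical bundle, Kodaira's lemma and \cite[Lemma 6.6]{huybrechts:fouriermukai} show that $\pi_Y\colon Z\to Y$ is generically finite, whence $\dim Z=\dim X=\dim Y$, and Kawamata's original argument applied to $Z$ itself yields $K$-equivalence of the total spaces. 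So the correct conclusion of your proof is: once Rouquier-stability of $\Pic0 X$ is in hand, invoke Theorem \ref{relkawintro} directly (with $b_X=a_X$, noting that the general fiber of the Stein factorization $s_X$ is a connected component of the general Albanese fiber, so the bigness hypothesis transfers); the fiberwise equivalence of part (ii) is not needed.
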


In another direction,  even if the base $V$  of an irregular fibration as in \eqref{diagr:Vintro} is not of general type, then in any case $\widehat{B_V}$ contains a certain Rouquier-stable subvariety (namely, $\Pic0$ of the base of the Iitaka fibration of $V$),  leading to  the following result extending Theorem \ref{Thmfibrintro}(i).
\begin{theorem}\label{thmnotgentype}
Suppose  $\DD(X) \simeq \DD(Y)$. If  $X$ admits an irregular fibration $f \colon X \to V$, then
there exists 
 a fibration $h \colon Y \to W$ of $Y$ onto a normal projective variety $W$ 
 which is birational to the base of the Iitaka fibration of $V$. In particular, we have	
 	 $\dim W = \mathrm{kod} (V)$  and   any smooth model of $W$ is of maximal Albanese dimension.
\end{theorem}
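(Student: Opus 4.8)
The plan is to reduce the statement to Theorem~\ref{Thmfibrintro}(i) by replacing $V$ with the base of its Iitaka fibration, which one knows to be of general type thanks to Kawamata's structure theory of varieties of maximal Albanese dimension. First I would fix a non-singular representative $\widetilde f\colon \widetilde X\to \widetilde V$ as in \eqref{diagr:Vintro}. Since $\widetilde V$ is of maximal Albanese dimension, we have $\mathrm{kod}(V)=\kappa(\widetilde V)\geq 0$ (a variety of maximal Albanese dimension has non-negative Kodaira dimension); if $\mathrm{kod}(V)=0$ then $W$ is a point and there is nothing to prove, so I assume $\mathrm{kod}(V)>0$.

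The key geometric input is Kawamata's theorem applied to the Iitaka fibration $g\colon \widetilde V\to W_0$ of the maximal-Albanese-dimension variety $\widetilde V$: up to birational modification, $W_0$ is a smooth projective variety which is again of maximal Albanese dimension and \emph{of general type}, with $\dim W_0=\kappa(\widetilde V)=\mathrm{kod}(V)$; concretely, $W_0$ carries a generically finite map onto a quotient of $\Alb\widetilde V$. The composition $g\circ\widetilde f\colon \widetilde X\to W_0$ has connected fibers, hence it is a non-singular representative of a (rational) fibration of $X$ onto the general-type variety $W_0$, whose associated abelian subvariety is
\[
\widehat{B_{W_0}}\;\stackrel{\mathrm{def}}{=}\;\rho_*\,\widetilde f^{*}g^{*}\Pic0 W_0\;\subset\;\rho_*\,\widetilde f^{*}\Pic0 \widetilde V\;=\;\widehat{B_V}\;\subset\;\Pic0 X .
\]
This is exactly the Rouquier-stable subvariety of $\widehat{B_V}$ alluded to before the statement: because $W_0$ is of general type, $\widehat{B_{W_0}}$ is Rouquier-stable by Lemma~\ref{lemmagentypeRunv}.

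It then remains to run the construction underlying Theorem~\ref{Thmfibrintro}(i). Applying the general results of \S\ref{F0} to the Rouquier-stable subvariety $\widehat{B_{W_0}}\subset\Pic0 X$, its image $\widehat{B_W}\subset\Pic0 Y$ under the Rouquier isomorphism \eqref{intro-rouquier} is again Rouquier-stable and produces a fibration $h\colon Y\to W$ with $W$ birational to $W_0$. Since $W_0$ is the base of the Iitaka fibration of $V$, we get $\dim W=\dim W_0=\mathrm{kod}(V)$, and $h$ is irregular when $\mathrm{kod}(V)>0$ because then $W_0$ is a positive-dimensional variety of general type. The hard part is really the first step: I must be sure that the base of the Iitaka fibration of the maximal-Albanese-dimension variety $\widetilde V$ is both of general type and of maximal Albanese dimension, since it is precisely this that makes $\widehat{B_{W_0}}$ fall under Lemma~\ref{lemmagentypeRunv} and hence lets the machinery of \S\ref{F0} produce $h$.
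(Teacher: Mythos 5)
Your reduction hinges on the claim that the base $W_0$ of the Iitaka fibration of the maximal-Albanese-dimension variety $\widetilde V$ is of general type, and that claim is false. What Ueno's theorem actually gives is that the Iitaka base of a \emph{subvariety of an abelian variety} is of general type; but $\widetilde V$ is only generically finite over its Albanese image, and a generically finite cover can have strictly larger Kodaira dimension than its image, so the Iitaka fibration of $\widetilde V$ need not descend from the Ueno fibration of $a_{\widetilde V}(\widetilde V)$. Concretely: let $C \to E_1$ be a double cover of an elliptic curve branched at two points (so $g(C)=2$), let $\ZZ/2$ act on an elliptic curve $E_2$ by translation by a $2$-torsion point, and set $S = (C\times E_2)/(\ZZ/2)$ with the diagonal action. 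Then $S$ is smooth, the induced map $S \to E_1 \times \big(E_2/(\ZZ/2)\big)$ is finite, so $S$ has maximal Albanese dimension; on the other hand the $\ZZ/2$-invariant pluricanonical sections are computed on the orbifold $\big(E_1, \tfrac12 p_1 + \tfrac12 p_2\big)$, whence $\kappa(S)=1$ and the Iitaka fibration is $S \to E_1$, onto an \emph{elliptic} curve. So Lemma \ref{lemmagentypeRunv} simply does not apply to your composed fibration onto $W_0$: the Iitaka base of a maximal-Albanese-dimension variety is again of maximal Albanese dimension (Hacon--Pardini, as the paper cites), but in general not of general type. The gap is even visible in the statement you are proving: it asserts only that $h$ is irregular when $\mathrm{kod}(V)>0$, not that $W$ is of general type, which your argument would force.

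The missing idea --- and the paper's actual route --- is to get Rouquier stability of the relevant subvariety without any general-type hypothesis. With notation as in \eqref{fincomm}, one invokes \cite[Theorem 2.3]{chen+hacon:pluricanonical}: for $\widetilde V$ of maximal Albanese dimension, the abelian variety $\widehat{\kappa}(\Pic0 Z_V)$ is contained in the abelian subvariety of $\Pic0 \widetilde V$ \emph{generated by} $V^0(\widetilde V, \omega_{\widetilde V})$. Combining this with the observation, already contained in the proof of Lemma \ref{lemmagentypeRunv}, that ${\rho}_* \widetilde{f}^* V^0(\widetilde V, \omega_{\widetilde V})$ lies in $\ker(g^*)$, whose identity component is Rouquier-stable, one concludes that $\widehat{\pi}\big(\widehat{\kappa}(\Pic0 Z_V)\big)$ is Rouquier-stable. (In the general-type case of Theorem \ref{Thmfibrintro} one uses instead Chen--Hacon's result that $V^0(\widetilde V,\omega_{\widetilde V})$ generates all of $\Pic0 \widetilde V$; here one only obtains the smaller subvariety, which is precisely why the output fibration has base birational to $Z_V$ rather than to $V$.) From that point on your mechanics are the paper's: Stein-factorize the morphism attached to $\varphi\big(\widehat{\pi}\big(\widehat{\kappa}(\Pic0 Z_V)\big)\big) \subseteq \Pic0 Y$, and use Theorem \ref{step1}(i) together with the fact that $Z_V$ is of maximal Albanese dimension --- so $a_{Z_V}$ is generically finite and the Stein base of $\kappa\circ\pi\circ a_X$ is birational to $Z_V$ --- to identify $W$ up to birational equivalence.
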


This quite satisfactorily answers 
  the problem of understanding in which manner an arbitrary  irregular fibration of a given variety varies under derived equivalence of its bounded derived category.
Moreover, in \cite{popa:conj} Popa conjectured the derived invariance of  non-vanishing canonical loci (see also \S \ref{cohsupploci} below) figuring out that the geometric meaning of his conjecture is 
 that derived equivalent varieties should have the same type of fibrations over
lower dimensional irregular varieties, this   allowing for more geometric tools in the study
of Fourier--Mukai partners. 
Popa proved a version of this principle assuming his conjecture (see \cite[Corollary 3.4]{popa:conj}).
Here we notice that,  although Popa's conjecture is still open, 
  our techniques allow to get an unconditional  proof of  his result: 
\begin{theorem}
Suppose $\DD(X) \simeq \DD(Y)$. If  $X$ admits a fibration $f \colon X \to Z$ onto a normal projective $m$-dimensional variety $Z$  whose Albanese map\footnote{Actually the Albanese map of a desingularization of $Z$.}  is not surjective, then $Y$ admits an irregular fibration $h \colon Y \to W$ onto a  variety  of general type $W$,  with $0 < \dim W \leq m$.
\end{theorem}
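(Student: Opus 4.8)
The plan is to manufacture, from $f$ and the deficiency of the Albanese map of $\widetilde Z$, a Rouquier-stable subvariety of $\Pic0 X$ coming from a base of general type of dimension in $(0,m]$, and then to feed it into the machinery of \S\ref{F0}, exactly as in the proof of Theorem \ref{Thmfibrintro}(i).

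First I would study the base. Let $a \colon \widetilde Z \to \mathrm{Alb}(\widetilde Z)=:A$ be the Albanese map and $V := a(\widetilde Z)$ its image, which generates $A$ but by hypothesis is a proper subvariety. By the Ueno--Kawamata structure theory of subvarieties of abelian varieties there is an abelian subvariety $B \subset A$, maximal with $V+B=V$, such that the image $\overline V := p(V)$ under the quotient $p \colon A \to A/B$ is of general type. Since $V$ generates $A$, so does $\overline V$ generate $A/B$, whence $\overline V$ cannot be a point (otherwise $B=A$ and $V=A$, against non-surjectivity); therefore $0 < \dim \overline V \le \dim V \le \dim Z = m$. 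Taking the Stein factorization of $p\circ a \colon \widetilde Z \to \overline V$ yields a fibration $\widetilde Z \to W$ onto a normal projective variety finite over $\overline V$; being generically finite over $\overline V \subset A/B$, the variety $W$ is of general type and of maximal Albanese dimension, so $\widetilde Z \to W$ is an irregular fibration with $0 < \dim W \le m$.

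Next I would compose with $f$. Using the birational morphism $\theta \colon \widetilde Z \to Z$ and $\widetilde Z \to W$, the fibration $f$ induces a dominant rational map $X \dashrightarrow W$; as $f$ and $\widetilde Z \to W$ both have connected fibers, so does its general fiber. A non-singular representative of this rational fibration, say $\widetilde X \xrightarrow{\widetilde f} \widetilde W$ with $\rho\colon \widetilde X \to X$ birational and $\widetilde W \to W$ a resolution, then furnishes the abelian subvariety $\widehat{B_W} := \rho_* \widetilde f^* \Pic0 \widetilde W \subset \Pic0 X$. The essential point --- and the step I expect to be the main obstacle --- is that, although the fibration morphism is only defined on the blow-up $\widetilde X$, the subvariety $\widehat{B_W}$ lives in $\Pic0 X$ itself, so it is acted upon by the Rouquier isomorphism attached to $\DD(X)\simeq \DD(Y)$; this, together with the verification that $W$ is normal, of general type and of maximal Albanese dimension, is the delicate bookkeeping. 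Granting it, Lemma \ref{lemmagentypeRunv} shows that $\widehat{B_W}$ is Rouquier-stable precisely because $W$ is of general type.

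Finally, applying the results of \S\ref{F0} (precisely as in Theorem \ref{Thmfibrintro}(i)) to this Rouquier-stable subvariety produces an irregular fibration $h \colon Y \to W'$ with $W'$ birational to $W$; by birational invariance $W'$ is of general type, and $0 < \dim W' = \dim W \le m$, which is the assertion. The genuinely hard, derived-categorical content is already carried by \S\ref{F0}, so the work here reduces to the geometric set-up of the first two paragraphs and the observation that the invariant $\widehat{B_W}$, though built from a birational model, belongs to $\Pic0 X$. In particular the argument is unconditional, dispensing with any hypothesis on the derived invariance of canonical non-vanishing loci.
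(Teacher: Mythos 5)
Your proposal is correct and follows essentially the same route as the paper: the Ueno--Kawamata stabilizer decomposition you spell out is exactly the content of the fiber-bundle structure theorem the paper cites from Ueno, after which both arguments take the Stein factorization of the composition with the Albanese map, compose with $f$, and feed the resulting positive-dimensional general-type irregular fibration into Lemma \ref{lemmagentypeRunv} and the machinery of Theorem \ref{Thmfibrintro}(i). Your explicit care about the fibration being a priori only a rational map on $X$ (since the Albanese map lives on a desingularization of $Z$), handled by passing to a non-singular representative and noting that $\widehat{B_W}$ nevertheless sits in $\Pic0 X$, is precisely what the paper's parenthetical ``(and its proof)'' is meant to absorb, so there is no gap.
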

\begin{proof} If the Albanese map of (a desingularization of) $Z$  is not surjective, then its image  admits a fiber bundle structure $g$ onto a positive dimensional variety  of maximal Albanese dimension and of general type   \cite[Theorem 10.9 and Corollary 10.5]{ueno:book}. By taking the Stein factorization of the composition  $g \circ a_{Z}$, we see that    
 $Z$ admits a fibration $g'$ onto a normal projective variety of general type, maximal Albanese dimension and positive dimension $\leq m$.  So 
Theorem \ref{Thmfibrintro}(i) (and its proof)  may apply to the composition $g' \circ f$.
\end{proof}

\noindent  \textbf{Acknowledgements.}
We are grateful to Beppe Pareschi for his insights and advice, and to Mihnea Popa for fruitful conversations
regarding the statements of Theorem \ref{Thmfibrintro}.
LL thanks Zhi Jiang for his hospitality at  the 
Shanghai Center for Mathematical Sciences (Fudan University)
where part of this work was carried out.
We are also grateful to the 
 anonymous referees whose  carefully reading  and useful comments (especially concerning \S 3) allowed us to improve the paper.

\noindent \textbf{Notation.}
 Our ground field is the field of complex numbers $\CC$. 
 A variety means an irreducible smooth projective variety, unless otherwise stated. 
A fibration is a surjective morphism of normal projective varieties with connected fibers.
  If $X$ is a variety, we denote by $\DD(X) := \DD^b \big(\mathcal{C}oh (X) \big)$ the bounded derived category of coherent sheaves on $X$.  
 The Albanese map of $X$
is denoted by $a_X \colon X \to \Alb X$. The irregularity of $X$ is $q(X):= h^{1, 0}(X) = \dim \Alb X$.
If we fix a Poincar\'e line bundle $\cP$ on $X \times \Pic0 X$, 
we denote by $P_\alpha := \cP |_{ X \times \{ \alpha \} }$  
the line bundle  parameterized by the  point $\alpha\in\Pic0 X$.
%
Given a morphism of abelian varieties $\varphi \colon A\rightarrow B$,  we denote by 
$\widehat \varphi \colon \widehat B \rightarrow \widehat A$  the dual morphism of $\varphi$.

\section{Rouquier-stable subvarieties}\label{sectionRunmixed}

Let $X$ and $Y$ be smooth  projective  complex varieties and let 
$\Phi \colon \DD(X) \xrightarrow{\sim} \DD(Y)$ be an  exact equivalence  between the derived categories of $X$ and $Y$. 
The equivalence $\Phi$ induces  a functorial  isomorphism of algebraic groups
 \begin{equation}\label{intro-rouquier}
  \varphi \colon \Aut0 X\times \Pic0 X \buildrel\sim\over\longrightarrow  \Aut0 Y\times \Pic0 Y
 \end{equation}
known as the \emph{Rouquier isomorphism} \cite{rouquier:automorphisms}. This isomorphism 
has been employed by Orlov in \cite{orlov:eqAV} in order to classify derived equivalences of abelian varieties. Moreover, it   
plays a crucial role in Popa--Schnell's proof of the derived invariance of the irregularity \cite{popa+schnell:invariance}.
Other applications are contained in
 \cite{lombardi:invariants}, \cite{lombardipopa:conj}, \cite{caucci+pareschi:invariants} and  \cite{lieblich+olsson:2022}.

The main difficulty in dealing with the Rouquier isomorphism is that, in general,  it 
does not respect the factors.
Namely,  there exist equivalences such that 
$$\varphi \big(  \{ {\rm id}_X \} \times \Pic0 X \big) \neq  \{ {\rm id}_Y \} \times \Pic0 Y.$$ 
For instance, this is the case of 
the Fourier--Mukai--Poincar\'e transform between an abelian variety and its dual.
Quite naturally, one is led to consider \emph{Rouquier-stable subvarieties}   as introduced in 
 \cite{calopa:forth}.
\begin{definition} 
 An abelian subvariety $\widehat{B_X} \subset \Pic0 X$ is  \emph{Rouquier-stable} (with respect to the equivalence $\Phi$),  
 if the induced Rouquier isomorphism \eqref{intro-rouquier} satisfies
\[
\varphi \big( \{ {\rm id}_X \} \times \widehat{B_X} \, \big) \subseteq \{ {\rm id}_Y \} \times \Pic0 Y.
\]
We denote by $\widehat{B_Y}$ the 
abelian subvariety 
$$ \widehat{B_Y} \; := 
p_{\Pic0 Y} \big( \varphi \big( \{ {\rm id}_X \} \times \widehat{B_X} \, \big) \big)$$ 
 of $\Pic0 Y$, where 
$p_{\Pic0 Y} \colon \Aut0 Y \times \Pic0 Y \to \Pic0 Y$ is the projection onto the second factor. 
By a slight abuse of notation 
we simply  write $\widehat{B_Y} = \varphi (\widehat{B_X})$.
\end{definition}
We refer the reader to Subsection \ref{secex} for several examples  of Rouquier-stable subvarieties.

A Rouquier-stable subvariety $\widehat{B_X}\subseteq \Pic0 X $ induces two 
morphisms.  The first $b_X \colon X \to B_X$
is   given as the composition
\begin{equation*}
\xymatrix{
 X \ar@/^2pc/[rr]^{b_X} \ar[r]^{a_X} & \Alb X \ar[r]^{ \;\;\pi_{B_X}} &  B_X
 } 
\end{equation*}
 of the Albanese map $a_X\colon X \to \Alb X$ with the dual morphism $\pi_{B_X}$  
of the inclusion $\widehat{B_X}\subseteq \Pic0 X$.
The second $b_Y \colon Y \to B_Y$
 is defined similarly as the composition
\begin{equation*}
\xymatrix{
 Y \ar@/^2pc/[rr]^{b_Y} \ar[r]^{a_Y} & \Alb Y \ar[r]^{ \;\;\pi_{B_Y}} &  B_Y.
 } 
\end{equation*} 
 We refer to  the morphisms $b_X \colon X \to B_X$ and $b_Y \colon Y \to B_Y$ 
 as   a  \emph{(pair of) Rouquier-stable morphisms}. By taking the Stein factorization, we have 
 commutative diagrams
 \begin{equation*}\label{stein} 
\xymatrix{ X \ar[r]^{a_X\;\;\;} \ar[rd]^{b_X} \ar[d]^{s_X} & \Alb X \ar[d]^{\pi_{B_X}} \\ 
 X' \ar[r]^{b'_X}   & B_X} \qquad \qquad
 \qquad \xymatrix{ Y \ar[r]^{a_Y\;\;\;} \ar[rd]^{b_Y} \ar[d]^{s_Y} & \Alb Y \ar[d]^{\pi_{B_Y}} \\ 
 Y' \ar[r]^{b'_Y} & B_Y} 
 \end{equation*} 
where $s_X \colon X \to X'$  and $s_Y\colon Y \to Y'$ 
are fibrations onto normal projective varieties and $b_X'\colon X' \to B_X$ and $b_Y' \colon Y' \to B_Y$ 
are finite morphism onto their images. A result of \cite{calopa:forth} shows that the finite components of these Stein factorizations are isomorphic. More
precisely, 
there exists an isomorphism $\psi \colon Y' \xrightarrow{\sim} X'$ such that the diagram 
\begin{equation*}\label{xy2-intro}
\xymatrix{X' \ar[d]^-{b_X'} &Y' \ar[l]_-{\psi} \ar[d]^-{b_Y'}\\
B_X  &B_Y \ar[l]_-{\widehat{\varphi}}}
\end{equation*}
 is commutative, where $\widehat{\varphi}$ is the dual isomorphism. In \S \ref{F0} we will recall this fact and, 
building on \cite{lieblich+olsson:2022},
we show that  the general fibers of  $s_X$ and $s_Y$ are derived equivalent.

\subsection{Examples}\label{secex}

In this subsection we present a few  examples of Rouquier-stable subvarieties. 
Let us fix   an equivalence $\Phi \colon \DD(X) \to \DD(Y)$ of triangulated categories
and  let 
$\varphi \colon \Aut0 X \times  \Pic0 X   \to   \Aut0 Y \times  \Pic0 Y$  be the induced Rouquier isomorphism. 
We point out that, aside from \ref{SFE}, all the examples presented below are intrinsically Rouquier-stable, \emph{i.e.}, they are stable with respect to any equivalence.

\subsubsection{The trivial example}
The subset $\{ \, \hat{0} \, \}\subset \Pic0 X$    is Rouquier-stable.
The induced pair of  Rouquier-stable morphisms are 
the   constant maps $X \to \{0\}$ and $Y\to \{0\}$.

\subsubsection{A numerical condition}\label{subs:num}
Let $a(X) := \dim \Alb ( \Aut0 X)$. If $q(X) > a(X)$, then 
there exists a 
Rouquier-stable subvariety of positive dimension of $\Pic0 X$. For a proof of this fact we adopt the terminology of \cite[pp. 532-533]{popa+schnell:invariance}.
Set  $G_X = \Aut0(X)$.
Then $\dim ({\rm ker}(\pi)_0) = \dim \big( {\rm ker}(\Pic0(X) \to \widehat A)_0 \big)  = q(X) - \dim \widehat A \geq q(X) - a(X) >0$ so that 
${\rm ker}(\pi)_0$ is 
an abelian variety of positive dimension, which must be  Rouquier-stable.

 \subsubsection{Affine automorphism group} \label{subs:aut}
This is a special case of the above \eqref{subs:num}.   
Following \cite[p. 534]{popa+schnell:invariance},  
$\Aut0 X$ is an affine algebraic group if and only if $\Aut0 Y$ is so. In this case, since $\Pic0 X$ is projective and the irregularity is a derived invariant  (\emph{cf}. \cite[Corollary B]{popa+schnell:invariance}), one has 
\begin{equation}\label{invapic}
\varphi \big( \{ {\rm id}_X \} \times  \Pic0 X  \big) =  \{ {\rm id}_Y \} \times \Pic0 Y,
\end{equation}
so that $\Pic0 X$ is  Rouquier-stable.\footnote{Proof: Since $\Aut0 Y$ is affine,  the composition
$\{ \rm{id}_X \} \times \Pic0 X \xrightarrow{\varphi} \Aut0 Y \times \Pic0 Y \rightarrow \Aut0 Y$
is constant.} 
The induced pair of Rouquier-stable morphisms are the 
Albanese maps $a_X$ and $a_Y$ themselves.
Instances of varieties with  affine automorphism group ${\rm Aut}^0(-)$ are  varieties 
with non-vanishing Euler characteristic  $\chi(X, \OO_X) \neq 0$ \cite[Corollary 2.6]{popa+schnell:invariance},
 and varieties  with big (anti)canonical line bundle (see, \emph{e.g.}, \cite[Proposition 2.26]{brion:notes} for a detailed proof of this folklore result. 
 Besides,  the automorphism group ${\rm Aut}(-)$ of a  variety of general type is  finite by a classical result of Matsumura \cite[Corollari 2]{matsumura:gentype}).

 \subsubsection{Strongly filtered equivalences}\label{SFE}
 In the   paper \cite{lieblich+olsson:birational}, the authors   introduce a 
 notion of   equivalence 
  called \emph{strongly filtered}.\footnote{An equivalence
 is   \emph{strongly filtered} if  it 
preserves the codimension filtration on the numerical Chow ring, together  with the Hochschild--Kostant--Rosenberg  filtrations on 
the  Hochschild homology and cohomology. } 
For this type of equivalence the
  formula \eqref{invapic} continues to hold. 
As suggested in \cite{popa+schnell:invariance}  and \cite{lieblich+olsson:birational}, 
the level of mixedness of the Rouquier isomorphism could be interpreted as 
a measure of the complexity of a derived equivalence from the point of view of birational geometry. 
For instance, in \cite{lieblich+olsson:birational} it is proved  that 
a strongly filtered equivalence of smooth 
projective threefolds with positive irregularity 
induces a birational isomorphism.

\subsubsection{Cohomological support loci}\label{cohsupploci}	
	 Given a coherent  sheaf $\F$ on a variety $X$, the \emph{cohomological support loci attached to} $\F$ are  the algebraic closed subsets 
\begin{equation*}\label{GL-set}
V^i ( X , \F ) \,  = \,   \{ \, \alpha\in \Pic0 X \; \big|  \; H^i(X,\F\otimes P_\alpha) \neq 0 \, \}.
\end{equation*}
Let us denote by $V^i(X, \F)_0$  the union of the irreducible components of $V^i(X, \F)$ passing through the origin.
By \cite[Claim 3.3]{lombardi:invariants}, one has
\begin{equation}\label{nonvanloci}
\varphi \big( \{{\rm id}_X\} \times V^i(X , \Omega_X^j \otimes \omega_X^{\otimes m} )_0 \big) \subseteq \{ {\rm id}_Y \} \times \Pic0Y
\end{equation}
for all $i, j \geq 0$ and $m \in \ZZ$.
In particular, any abelian subvariety of $\Pic0 X$ that is contained in some $V^i(X , \Omega_X^j \otimes \omega_X^{\otimes m})_0$  is 
Rouquier-stable. Moreover,
since $\varphi$ is an isomorphism of algebraic groups, the abelian subvarieties of $\Pic0 X$ generated by the loci 
$V^i(X , \Omega_X^j \otimes \omega_X^{\otimes m})_0$ (or by any of their irreducible components) are Rouquier-stable.
It is also known  by \cite[Proposition 3.1]{lombardi:invariants}
that
the Rouquier isomorphism preserves the full loci $V^0(X , \omega_X^{\otimes m})$, namely
\begin{equation}\label{invV0}
\varphi \big( \{ {\rm id}_X \} \times V^0( X, \omega_X^{\otimes m} ) \big) = \{  {\rm id}_Y \} \times V^0( Y , \omega_Y^{\otimes m} ),\quad \forall m\in \ZZ.
\end{equation}
Note that the same behavior is expected to hold for the loci $V^i(X ,  \omega_X )_0$
for any  $i$
  \cite[Conjecture 11]{lombardipopa:conj}.

\subsubsection{The Albanese--Iitaka morphism}
Suppose that the  Kodaira dimension of $X$ is non-negative.
We can choose a smooth birational modification $\widetilde{X} \rightarrow X$ such that the Iitaka fibration is represented by a morphism 
$f_{\widetilde X} \colon \widetilde{X} \rightarrow Z_X$ with $Z_X$ smooth. More concretely, we have a 
  commutative diagram
\begin{equation*}\label{diag}
\xymatrix{
\widetilde X \ar@/^2pc/[rr]^{a_{\widetilde X}} \ar[r] \ar[rd]_{f_{\widetilde X} } & X \ar[r]^{a_X} \ar@{-->}[d] \ar[dr]^{c_X} & \Alb X \ar[d]^{\pi_X} \\
& {Z_X} \ar[r]^{a_{Z_X}} & \Alb Z_X
}
\end{equation*}
where $a_X, a_{\widetilde X}$ and $a_{Z_X}$ are  Albanese maps, and  $\pi_X$ is a fibration of abelian varieties 
induced by $f_{\widetilde X}$ (\emph{cf}. \cite[Lemma 11.1(a)]{hps:metrics}). 
Note that  $\Alb Z_X$,  $\pi_X$ and  $c_X = \pi_X \circ a_X$  only depend on $X$,
 and not on the modification $\widetilde X$ we fixed. 
In \cite{calopa:forth}, the morphism $c_X$ is called the \emph{Albanese--Iitaka morphism} of $X$.
It follows from \eqref{invV0} that
 the Rouquier isomorphism acts  as
$$\varphi \big( \{ {\rm id}_X \} \times  \widehat{\pi}_X(\Pic0 Z_X) \big) = \{ {\rm id}_Y \} \times  \widehat{\pi}_Y(\Pic0 Z_Y)$$
(see  \cite[Proof of Lemma 3.4]{caucci+pareschi:invariants}).
 In particular,
$\widehat{\pi}_X(\Pic0 Z_X)$ is a  Rouquier-stable subvariety  and the Albanese--Iitaka morphisms $c_X$ and $c_Y$ are Rouquier-stable.
This fact was already noted (and used) in \cite{caucci+pareschi:invariants}  and  moreover it is particularly useful in \cite{calopa:forth}.

\subsubsection{Fibrations over varieties of general type}
Let $f\colon X \to V$ be an irregular fibration onto a normal projective  variety of general type. 
By keeping notation as in \eqref{diagr:Vintro},
we will show in Lemma \ref{lemmagentypeRunv} that the abelian subvariety $\rho_* \widetilde f^* \Pic0 \widetilde V
\subset \Pic0 X$ is Rouquier-stable. In particular, the 
abelian subvarieties attached to $\chi$-positive fibrations considered in 
\cite{lombardi:fibrations} are Rouquier-stable (\emph{cf}. \cite[Remark 14]{lombardi:fibrations}).

\section{The Stein factorization of a  Rouquier-stable morphism}\label{F0}

In this section we study the effects of
 the existence of a non-trivial Rouquier-stable subvariety.
Informally speaking, one such  subvariety   turns a derived equivalence into a  relative equivalence, at least generically.

Let $\Phi \colon \DD(X) \to \DD(Y)$ be an equivalence of triangulated categories and let $p\colon X \times Y \to X$ and $q\colon X \times Y \to Y$
be the natural projections onto the first and second factor, respectively.
By Orlov's representability theorem \cite[Theorem 2.2]{orlov:repr}, there exists an object $\E$ in $\DD(X\times Y)$ that is unique
up to isomorphism and such that  $\Phi(-) \simeq \Phi_{\E} (-) := \R q_* \big( p^* (-) \stackrel{\L}{\otimes} \E \big)$. 
We denote by $\varphi_{\E}$ the Rouquier isomorphism induced by $\Phi_{\E}$. 
\begin{theorem}\label{step1} 
Let $\Phi_{\E} \colon \DD(X) \to \DD(Y)$ be an equivalence and let $\widehat{B_X}\subset \Pic0 X$ be a Rouquier-stable subvariety.
Moreover, let $b_X \colon X \to B_X$ and $b_Y \colon Y \to B_Y$ be the induced pair
of Rouquier-stable morphisms.
By considering the Stein factorizations of $b_X$ and $b_Y$,  we have the following commutative diagrams
 \begin{equation}\label{xy}
 \xymatrix{ X \ar[r]^{a_X\;\;\;\;\;} \ar[rd]^{b_X} \ar[d]^{s_X} & \Alb X \ar[d]^{\pi_{B_X}} \\ 
 X' \ar[r]^{b'_X}   & B_X} \qquad \qquad
 \qquad \xymatrix{ Y \ar[r]^{a_Y\;\;\;} \ar[rd]^{b_Y} \ar[d]^{s_Y} & \Alb Y \ar[d]^{\pi_{B_Y}} \\ 
 Y' \ar[r]^{b'_Y} & B_Y} 
 \end{equation} 
where $s_X$ and $s_Y$ are surjective morphisms with connected fibers, and $b_X'$ and $b_Y'$ are finite morphisms onto their images. Then:
\begin{itemize}
\item[(i)] There exists  
an isomorphism $\psi \colon Y^\prime \stackrel{ \sim }{\rightarrow} X^\prime$ of normal projective  varieties such that 
the kernel $\E$ is set-theoretically supported on the fiber product $X \times_{Y'} Y$ defined as follows:
\begin{equation*}\label{xyfiberp}
\xymatrix{
X \times_{Y'} Y \ar[r] \ar[d]  & X \ar[d]^{\psi^{-1} \circ s_X}\\
Y \ar[r]^{s_Y} &  Y'. }
\end{equation*}

\item[(ii)] The finite parts of the Stein factorizations 
  are isomorphic, \emph{i.e.}
 the following diagram commutes
\begin{equation*}\label{xy2'}
\xymatrix{X' \ar[d]^{b_X'} &Y' \ar[l]^{\psi}_\sim\ar[d]^{b_Y'}\\
B_X & B_Y. \ar[l]^{\widehat{\varphi_\E}}_\sim}
\end{equation*}

\item[(iii)]  The fibers $s_X^{-1} \big( \psi(y') \big)$ and $s_Y^{-1}(y')$ are derived equivalent for   $y'$ general in $Y'$.\\

\end{itemize}
\end{theorem}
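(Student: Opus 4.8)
The plan is to derive (i) and (ii) from the support analysis of the Fourier--Mukai kernel combined with the cited result of \cite{calopa:forth}, and then to prove (iii) by relativizing the equivalence over the common base of the two Stein factorizations.

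For (i) and (ii), the starting point is the intertwining coming from Rouquier-stability: since $\varphi_\E$ maps $\{{\rm id}_X\}\times\widehat{B_X}$ into $\{{\rm id}_Y\}\times\Pic0 Y$, the automorphism component of $\varphi_\E(\alpha)$ vanishes for every $\alpha\in\widehat{B_X}$, so that $\Phi_\E(-\otimes P_\alpha)\simeq\Phi_\E(-)\otimes P_{\varphi_\E(\alpha)}$. Writing this out with the kernel and applying the projection formula, Orlov's uniqueness of Fourier--Mukai kernels yields $\E\otimes p^*P_\alpha\simeq\E\otimes q^*P_{\varphi_\E(\alpha)}$ for all $\alpha\in\widehat{B_X}$. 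As $P_\alpha$ and $P_{\varphi_\E(\alpha)}$ are pulled back from $B_X$ and $B_Y$ via $b_X$ and $b_Y$, this reformulates as $\E\otimes u^*N\simeq\E$ for all $N\in\Pic0 B_X$, where $u\colon X\times Y\to B_X$ sends $(x,y)$ to $b_X(x)-\widehat{\varphi_\E}(b_Y(y))$. A standard support lemma then forces $u\big(\Supp\E\big)$ to be finite; the refinement of this constraint to the fiber product $X\times_{Y'}Y$, the construction of the isomorphism $\psi\colon Y'\xrightarrow{\sim}X'$, and the commutativity of the square in (ii) are exactly the content of \cite{calopa:forth}, which I would simply invoke here.

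For (iii), identify $X'\cong Y'$ via $\psi$ and denote this common base by $S$, so that $s_X\colon X\to S$ and $s_Y\colon Y\to S$ are fibrations and, by (i), $\E$ is supported on $X\times_S Y$. Let $\F$ be the kernel of the inverse equivalence $\Phi_\E^{-1}$; since $\F$ is obtained from $\E$ by dualizing and twisting by a line bundle (the standard adjoint-kernel formula for an equivalence), one has $\Supp\F=\Supp\E$, so $\F$ too is supported on $X\times_S Y$. Thus $\E$ and $\F$ are relative kernels over $S$, and the identities $\Phi_\E\circ\Phi_\E^{-1}\simeq{\rm id}_{\DD(Y)}$ and $\Phi_\E^{-1}\circ\Phi_\E\simeq{\rm id}_{\DD(X)}$ read, at the level of kernels, as convolutions $\E\circ\F\simeq\OO_{\Delta_Y}$ and $\F\circ\E\simeq\OO_{\Delta_X}$. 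Because both kernels are supported on fiber products over $S$, each convolution is computed on a triple fiber product over $S$ and is therefore supported over the relative diagonal $\Delta_{Y/S}\subset Y\times_S Y$ (resp. $\Delta_{X/S}$), consistently with $\OO_{\Delta_Y}$ (resp. $\OO_{\Delta_X}$). Restricting to a general point $s\in S$ and writing $X_s:=s_X^{-1}(s)$ and $Y_s:=s_Y^{-1}(s)$, the fiber of $X\times_S Y$ over $s$ is $X_s\times Y_s$, so $\E$ and $\F$ restrict to kernels $\E_s,\F_s$ on $X_s\times Y_s$; granting that restriction commutes with convolution at a general $s$, we get $\E_s\circ\F_s\simeq\OO_{\Delta_{Y_s}}$ and $\F_s\circ\E_s\simeq\OO_{\Delta_{X_s}}$, whence $\Phi_{\E_s}\colon\DD(X_s)\xrightarrow{\sim}\DD(Y_s)$ is an equivalence. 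Unwinding the identification $S=Y'$, the fiber $X_s$ is $s_X^{-1}(\psi(y'))$ and $Y_s$ is $s_Y^{-1}(y')$, which is the assertion of (iii).

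\textbf{The main obstacle} is precisely the commutation of the convolution of kernels with restriction to a fiber: in general the restriction of $\E\circ\F$ to $s$ need not compute $\E_s\circ\F_s$, and one has to verify the flatness and Tor-independence hypotheses that validate derived base change along the projections of the triple fiber product. These hold only after shrinking $S$ (by generic flatness of the cohomology sheaves of $\E$ and $\F$ over $S$, together with generic smoothness of the fibers in characteristic zero), which is exactly why the conclusion concerns the \emph{general} fiber; supplying this base-change argument is the role of the kernel relativization technique of \cite{lieblich+olsson:birational}, after which (iii) follows formally.
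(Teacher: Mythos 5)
Your treatment of $(i)$ and $(ii)$ is consistent with the paper: the paper likewise defers the construction of $\psi$ and the support statement $\Supp(\E)\subseteq X\times_{Y'}Y$ to \cite{calopa:forth}, only sketching the argument, and your sketch via the identity $\E\otimes p^*P_\alpha\simeq\E\otimes q^*P_{\varphi(\alpha)}$ is indeed the underlying strategy of \cite{lombardi:fibrations}. The genuine gap is in $(iii)$, and it is not the one you flag. The step ``restriction commutes with convolution at a general $s$'' is false for the objects you actually use, and no generic flatness or generic smoothness can repair it, because the failure is an excess-intersection phenomenon present at \emph{every} $s\in S$. Writing $\iota\colon Y_s\times Y_s\hookrightarrow Y\times Y$ for the inclusion, the derived restriction of the right-hand side of your convolution identity is $\mathbf L\iota^*\OO_{\Delta_Y}\simeq\Delta_{Y_s*}\bigl(\bigoplus_i \wedge^i E^\vee[i]\bigr)$ for an excess bundle $E$ of rank $d=\dim S$ (trivial at a regular value of $s_Y$): the subvarieties $\Delta_Y$ and $Y_s\times Y_s$ meet in excess dimension inside $Y\times Y$, and $\Delta_Y$ is already flat over $S$, so flatness is not the obstruction. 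Concretely, take $\Phi=\mathrm{id}$ and $\E=\F=\OO_{\Delta_X}$: then your $\E_s:=\mathbf L\iota^*\OO_{\Delta_X}\simeq\bigoplus_{i=0}^{d}\OO_{\Delta_{X_s}}^{\oplus\binom{d}{i}}[i]$, so $\Phi_{\E_s}$ is not fully faithful and in particular not an equivalence, for every $s$ --- even though the fibers are trivially derived equivalent. The moral is that $\E_s=\E|_{X_s\times Y_s}$, the restriction computed on the ambient product, is the wrong fiberwise kernel, and the identity $\E_s\circ\F_s\simeq\OO_{\Delta_{Y_s}}$ you want is unsalvageable as stated.

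The correct fiberwise kernel is the restriction of a \emph{relative} kernel, and producing it is where the paper's actual work lies. Over a smooth open $U\subseteq Y'$ on which both fibrations are smooth, the paper uses $(i)$ together with \cite[Theorem 3.9]{lieblich+olsson:birational} to write $\E|_{U_X\times U_Y}\simeq l_*\mathcal C$ with $\mathcal C\in\DD(U_X\times_U U_Y)$, and the fiber kernel is $\mathcal C_u$, the restriction of $\mathcal C$ taken inside the fiber product, where the relative diagonal does restrict to the fiber diagonal without excess terms. The step your outline is silent on is the paper's Claim~\ref{subclaim:adj}, that the natural map $\E|_{U_X\times Y}\to\R j_*\bigl(\E|_{U_X\times U_Y}\bigr)$ is an isomorphism; with it, the convolutions of the restricted kernels with $\mathrm{ad}(\E)_U$ are computed against the global identities (e.g. $\mathrm{ad}(\E)\circ\E\simeq\delta_{X*}\OO_X$) by base change along \emph{open} immersions only --- which are flat, so no Tor-independence hypotheses ever enter --- yielding that $\Phi_{\E_U}\colon\DD(U_X)\to\DD(U_Y)$ is an equivalence. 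Only then does the relative criterion of \cite[Propositions 2.15 and 2.10]{lrs:relative} convert the $U$-linear equivalence $\Phi_{\mathcal C}$ into equivalences $\DD\bigl(s_X^{-1}(\psi(u))\bigr)\simeq\DD\bigl(s_Y^{-1}(u)\bigr)$, in fact for \emph{all} $u\in U$, not just very general ones. So the role of \cite{lieblich+olsson:birational} is not to verify flatness hypotheses for your fiberwise convolution, but to relativize the kernel structurally; your proof of $(iii)$ needs to be rerouted along these lines.
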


\begin{proof} 
The proofs of $(i)$ and $(ii)$ are given in \cite{calopa:forth} (see \S 8.1 of \emph{loc.\! cit.\!} and, especially, Theorem 8.1.1)  and follow the general strategy of \cite[Theorem 1]{lombardi:fibrations}.
We just recall the main points of  the proof of $(i)$ for reader's convenience.
Denote by  $$p'\colon X'\times Y' \to X' \quad \mbox{ and }\quad q' \colon X'\times Y' \to Y'$$
the natural projections. 
Let ${\rm Supp}(\E): = \bigcup_j \mathcal {\rm Supp} \big(\H^j(\E) \big) \subseteq X \times Y$ 
be the support of $\E$, equipped with the reduced scheme structure. 
Then the projections $p' \colon (s_X \times s_Y)(\Supp (\E)) \to X'$ and $q' \colon (s_X \times s_Y)(\Supp(\E)) \to Y'$ 
have finite fibers. Moreover, they are surjective with connected fibers. In other words,
 $(s_X \times s_Y) \big({\rm Supp}(\E) \big)$  dominates isomorphically both $X'$ and $Y'$.
Hence the map $\psi := ( p'\circ q'^{-1} ) \colon Y' \to X'$ is  an isomorphism and 
  $(s_X \times s_Y) \big( {\rm Supp}(\E) \big) = {\rm Graph}(\psi)$. In particular, we have 
\begin{equation*}\label{eq:suppfp}
 {\rm Supp}(\E)  \subset (s_X \times s_Y)^{-1} \big( {\rm Graph} (\psi) \big)  = X \times_{Y'} Y.
\end{equation*}

 In order to prove $(iii)$ we 
denote by $\tau \colon U \hookrightarrow Y'$ a smooth open subvariety 
over which  both $s_Y$ and $\psi^{-1} \circ s_X$ are smooth  morphisms, and define 
the preimages
$$X_U : = (\psi^{-1} \circ s_X)^{-1}(U) \quad \mbox{ and } \quad Y_U : =  s_Y^{-1}(U).$$
By a slight abuse of notation, we continue to denote by $\psi^{-1} \circ s_X$ and $s_Y$ the two 
restrictions $(\psi^{-1} \circ s_X)|_{X_U}  \colon X_U \to U$ and $s_Y|_{Y_U} \colon Y_U \to U$, respectively. 
Moreover, we consider  the  closed subscheme
\begin{equation}\label{defl}
Z \, := \, X_U \times_U Y_U \; \stackrel{ \ell }{\hookrightarrow} \; X_U \times Y_U.
\end{equation} 
As $\E$ is set-theoretically supported on $X\times_{Y'} Y$, 
the derived restriction $k^*\E $  
is set-theoretically supported on $Z$,
where  $k\colon X_U\times Y_U \hookrightarrow  X\times Y$ is the inclusion map.
Denote by 
$$i\colon X_U \times Y \hookrightarrow  X\times Y \quad \mbox{and}\quad  j \colon  X_U \times Y_U \hookrightarrow X_U \times Y$$ 
 the open immersions so that $k = i \circ j$.

\begin{claim}\label{rel0}
The kernel $\E_U :=  k^*\E \in \DD(X_U \times Y_U )$ defines an equivalence of bounded derived categories
$$\Phi_{\E_U} \colon \DD ( X_U ) \to \DD ( Y_U ) $$ (the functor is well-defined as the support of $\E_U$ is proper over both 
$X_U$ and $Y_U$). 
\end{claim}

\begin{proof}
Let $n = \dim X$.
The claim is    proved in  
\cite[3.18]{lieblich+olsson:2022}. For reader's ease we reproduce here the argument.
Denote by ${\rm ad}(\E)$ the adjoint kernel:
$${\rm ad}( \E)  := \E^{\stackrel{\R}{\vee}} \otimes p^*\omega_X[n] \simeq \E^{\stackrel{\R}{\vee}} \otimes q^*\omega_Y[n]$$ in $\DD(X\times Y)$ (the superscript $\stackrel{\R}{\vee}$ denotes the derived dual).
By considering the Fourier--Mukai transform in the other direction
$\Psi_{{\rm ad}( \E)  } (-) := \R p_* (q^*(-) \stackrel{\L}{\otimes} {\rm ad}( \E)  ) \colon \DD(Y) \to \DD(X)$, 
one can check that $\Psi_{{\rm ad}( \E)  }$ is a quasi-inverse of 
$\Phi_{\E}$. 
By denoting by $p_{ij}$ the projections from $X\times Y\times X$ onto the $i$-th   and  $j$-th factors,  it follows 
$$\R p_{13*} ( p_{12}^* \E \stackrel{\L}{\otimes } p_{32}^* {\rm ad}(\E) ) \simeq \delta_{X*}\OO_X,$$ where
$\delta_{X} \colon X\hookrightarrow X\times X$ is the diagonal embedding. 
 Let $a_{ij}$ be the projection from   $ X_U \times Y_U \times X_U$ onto the $i$-th   and  $j$-th factors and 
set ${\rm ad}(\E)_U  :=  k^* {\rm ad}( \E)$.  Moreover let $\delta_{X_U}\colon X_U \hookrightarrow X_U\times X_U$ be the diagonal embedding of $X_U$.
By pulling-back  the above isomorphism under the 
open immersion $r\colon X_U\times X_U \to X\times X$, and by noting that
$i^* \E$ is supported on $X_U\times Y_U$ so that $i^*\E \simeq \R j_*\E_U$ and  $i^*{\rm ad} (\E) \simeq \R j_* {\rm ad}(\E)_U$   
(see \cite[p.45 (1.4.3.4)]{BBD:sheaves}  or \cite[proof of Lemma 36.6.2]{stacks-project}), we have 
the isomorphism 
$$ \R a_{13*} ( a_{12}^* \E_U \stackrel{\L}{\otimes } a_{32}^* {\rm ad}(\E)_U) \simeq \delta_{X_U*} \OO_{X_U}.$$ 
Similarly   we can prove
$$ \R a_{13*} ( a_{12}^* {\rm ad}(\E)_U   \stackrel{\L}{\otimes } a_{32}^*\E_U )\simeq \delta_{X_U*} \OO_{X_U}$$ 
and  that $\Phi_{\E_U}$ is an equivalence.
 \end{proof}

\begin{claim}\label{lastclaim}
The restricted kernel $\E_U$   
is isomorphic to a pushforward $\ell_*\mathcal C$ for some object $\mathcal C$ in $\DD(Z)$, where $\ell $ is defined in \eqref{defl}.
\end{claim}
Assuming the above Claim \ref{lastclaim} for a moment, we conclude the proof as follows. 
From the isomorphism $\E_U \simeq \ell_*\mathcal C$ 
we have that
$$\Phi_{\E_{U}} = \Phi_{\mathcal C}\, \colon \DD ( X_U  ) \to \DD ( Y_U  ) $$ 
is a \emph{relative} integral functor. As   showed in \cite[Propositions 2.15 and  2.10]{lrs:relative}, 
 the derived restriction
$$\mathcal{C}_u  := \mathcal{C}|_{(\psi^{-1} \circ s_X )^{-1}(u) \times s_Y^{-1}(u)   }$$ 
induces a derived equivalence 
$\Phi_{\mathcal{C}_u} \colon \DD \big(  s_X^{-1}  (\psi( u ) ) \big) \to \DD \big( s_Y^{-1} ( u ) \big)$ 
for any closed point $u \in U$   if 
 $\Phi_{\E_{U}}$ is an equivalence. Therefore,  we get $(iii)$.  
\end{proof}

\begin{remark}
The equivalence $\Phi_{\E_U} \colon \DD ( X_U  ) \to \DD ( Y_U  )$ is $U$\emph{-linear}
in the sense that for all $\F$  in   $\DD ( X_U  )$ and $\G$ in $\DD(U)$ there are  bifunctorial isomorphisms  
$$ \Phi_{\E_U}  \big( \F \stackrel{ \mathbf L }{\otimes } (\psi^{-1} \circ s_X)^* \G \big) \simeq 
  \Phi_{\E_U}  ( \F ) \stackrel{ \mathbf L }{\otimes } s_Y^* \G$$
 (\emph{cf}. \cite[Lemma 2.33]{ku:hpd}).
\end{remark}

\begin{proof}[Proof of Claim \ref{lastclaim}]
This is an application of the criterion Theorem 1.1 in \cite{lieblich+olsson:2022}. More precisely, 
we need to verify the  conditions  \eqref{condabc}  
below, in order to apply \cite[Theorem 1.1]{lieblich+olsson:2022} and hence to get our result.
In what follows we argue similarly to \cite[Proof of Lemma  4.11]{lieblich+olsson:2022}. 
Recall the commutative diagram
\[
\xymatrix{
   &X  \ar[dl]_-{b_X} \ar[d]^-{s_X}     &Y \ar[d]_-{s_Y} \ar[dr]^-{b_Y}  &  \\
B_X   &X' \ar[l]_-{b_X'} &Y' \ar[l]_-{\psi} \ar[r]^-{b_Y'} &B_Y \ar@/^2pc/[lll]_-{\widehat{\varphi}_{\E}}\, .	
}
\]
Let us define the morphisms
\[
u_X \colon Y' \to B_X \times Y', \quad p \mapsto (b_X'(\psi(p)), p) 
\]
and
\[
u_Y \colon Y' \to B_Y \times Y', \quad p \mapsto (b_Y'(p), p) = ((\widehat{\varphi_{\E}})^{-1}(b_X'(\psi(p))), p)\, . 
\]
\begin{lemma}
One has
\begin{equation}\label{LOeq0}
p_{12}^* (b_X \times \mathrm{id}_{Y'})^*({u_X}_*\OO_{Y'}) \otimes p_{13}^* \E \simeq 
p_{32}^* (b_Y \times \mathrm{id}_{Y'})^*({u_Y}_*\OO_{Y'}) \otimes p_{13}^* \E 
\end{equation}
in $\DD(X \times Y' \times Y)$, where we dropped the derived notation $\R$ and $\L$ for simplicity.
\end{lemma}
\begin{proof}
The isomorphism $(\widehat{\varphi_{\E}})^{-1} \times \varphi_{\E} \colon B_X \times \widehat{B_X} \to B_Y \times \widehat{B_Y}$ preserves  Poincar\'e line bundles, that is, $((\widehat{\varphi_{\E}})^{-1} \times \varphi_{\E})^*\cP \simeq \cQ$ where $\cP$ and $\cQ$ are normalized Poincar\'e line bundles on $B_Y \times \widehat{B_Y}$ and $B_X \times \widehat{B_X}$, respectively.
By  \cite[Theorem 1.1]{mukai:EQ2}, we have an equivalence of derived  categories
\[
\DD(\widehat{B_X} \times Y') \xrightarrow{\simeq} \DD(B_X \times Y'), \quad \cG \mapsto {p_2}_*(p_1^* \cG \otimes \overline{p}_{12}^*\cQ),
\]
where $\overline{p}_{12} \colon B_X \times \widehat{B_X} \times Y' \simeq  (\widehat{B_X} \times Y') \times_{Y'} (B_X \times Y') \to B_X \times \widehat{B_X}$. Moreover, the following diagram is commutative
\begin{equation}\label{LOeq1}
\xymatrix{
\DD(\widehat{B_X} \times Y') \ar[r]^-{\simeq} \ar[d]^-{(\varphi_{\E} \times \mathrm{id})_*} &\DD(B_X \times Y') \ar[d]^-{((\widehat{\varphi_{\E}})^{-1} \times \mathrm{id})_*} \\
\DD(\widehat{B_Y} \times Y') \ar[r]^-{\simeq} &\DD(B_Y \times Y')\, ,
}
\end{equation}
where the bottom equivalence is similarly defined,  using $\cP$ instead of $\cQ$.
In particular, there exists a unique object $\cG \in \DD(\widehat{B_X} \times Y')$ such that
\begin{equation}\label{LOeq2}
{u_X}_* \OO_{Y'} \simeq {p_2}_*(p_1^* \cG \otimes \overline{p}_{12}^*\cQ).
\end{equation}
Let us consider the  commutative diagram:
\[
\xymatrix{
        X \times \widehat{B_X} \times Y' \times Y \ar[r]^-{p_{134}} \ar[d]^-{p_{123}}  \ar@/^1.5pc/[rr]^-{p_{14}} &X \times Y' \times Y \ar[d] \ar[r] &X \times Y \\
				X \times \widehat{B_X} \times Y' \ar[r] \ar[d]^-{b_X \times \mathrm{id}_{\widehat{B_X} \times Y'}} &X \times Y' \ar[d]^-{b_X \times \mathrm{id}_{Y'}} & \\
 B_X \times \widehat{B_X} \times Y' = (\widehat{B_X} \times Y') \times_{Y'} (B_X \times Y')  \ar[d]^-{p_1} \ar[r]^-{p_2} &B_X \times Y' & \\
   \widehat{B_X} \times Y' & &			
}
\]
When needed, we identify $X \times \widehat{B_X} \times Y’\times Y$ with 
$X \times \widehat{B_Y} \times Y’\times Y$  via the isomorphism ${\rm id}_X\times \varphi_{\E} \times {\rm id}_{Y'} \times {\rm id}_Y$
in order to lighten notation.
From this and \eqref{LOeq2}, by using  flat base change and the  projection formula, 
 one obtains 
\begin{equation*}
\begin{split}
p_{12}^* (b_X \times {\rm id}_{Y'})^*({u_X}_*\OO_{Y'}) \otimes p_{13}^* \E &\simeq p_{12}^* (b_X \times {\rm id}_{Y'})^*({p_2}_*(p_1^* \cG \otimes \overline{p}_{12}^*\cQ)) \otimes p_{13}^* \E \\
&\simeq {p_{134}}_* (p_{123}^*(b_X \times {\rm id}_{\widehat{B_X} \times Y'})^*(p_1^* \cG \otimes \overline{p}_{12}^*\cQ)) \otimes p_{13}^* \E \\
&\simeq {p_{134}}_*(p_{23}^* \cG  \otimes p_{12}^*\cQ_X \otimes p_{14}^*\E)\, ,
\end{split}
\end{equation*}
where $\cQ_X := (b_X \times {\rm id}_{\widehat{B_X}})^* \cQ$ and in the last equality we also used the commutative diagram
\[
\xymatrixcolsep{5pc}
\xymatrix{
X \times \widehat{B_X} \times Y' \times Y \ar[r]^-{p_{123}} \ar[dr]^-{p_{12}} &X \times \widehat{B_X} \times Y' \ar[d] \ar[r]^-{b_X \times {\rm id}_{\widehat{B_X} \times Y'}} &B_X \times \widehat{B_X} \times Y' \ar[d]^-{\overline{p}_{12}} \\
 &X \times \widehat{B_X} \ar[r]^-{b_X \times {\rm id}_{\widehat{B_X}}}  &B_X \times \widehat{B_X}\, .
}
\]
By \cite[Lemma 4.10]{lieblich+olsson:2022},\footnote{In \emph{loc.\! cit.}, the authors assume $\widehat{B_X} = \Pic0 X$. However, their proof works in our more general situation as well (see also \cite[Lemma 3.1]{popa+schnell:invariance}).} one has that 
\begin{equation}\label{LOeq3}
p_{12}^*\cQ_X \otimes p_{14}^*\E \simeq ({\rm id}_X \times \varphi_{\E} \times {\rm id}_{Y' \times Y})^* (p_{42}^*\cP_Y \otimes p_{14}^*\E)\, ,
\end{equation}
where $\cP_Y := (b_Y \times {\rm id}_{\widehat{B_Y}})^* \cP$.
 Therefore, we get
\begin{equation}\label{LOeq4}
{p_{134}}_*(p_{23}^* \cG  \otimes p_{12}^*\cQ_X \otimes p_{14}^*\E) \simeq {p_{134}}_*(( {\rm id}_X \times \varphi_{\E} \times {\rm id}_{Y' \times Y})^*( 
p_{23}^* (\varphi_{\E} \times {\rm id}_{Y'})_* \cG  \otimes 
p_{42}^*\cP_Y \otimes p_{14}^*\E)) 
\end{equation}
and, by arguing as before (in the reverse order) and using the commutativity of \eqref{LOeq1}, we see that the right-hand side in \eqref{LOeq4} is isomorphic 
to $p_{32}^* (b_Y \times {\rm id}_{Y'})^*({u_Y}_*\OO_{Y'}) \otimes p_{13}^* \E$, as desired.
\end{proof}

Let 
\[
\delta_0,\, \delta_1 \colon X_U  \times Y_U  \to X_U  \times U \times Y_U  
\]
be defined as $\delta_0((x, y)) = (x, \psi^{-1}(s_X(x)), y)$ and $\delta_1((x, y)) = (x, s_Y(y), y)$. 
Note that we may (and do) assume that $U = \psi^{-1}(b_X')^{-1}(V)$, where $V \subseteq B_X$ is an open subscheme such that $b_X$ is flat over it.
Consider the commutative diagram
\[
\xymatrixcolsep{3.5pc}
\xymatrix{ 
&X  \times Y' \times Y \ar[r]^-{p_{12}} &X \times Y' \ar[r]^-{b_X \times \mathrm{id}_{Y'}} &B_X \times Y' \\
X_U  \times U \times Y_U  \ar@{^{(}->}[r] \ar@/^1.5pc/[rrr]^-{\alpha_0} & X_U  \times U \times Y \ar[rr] \ar@{^{(}->}[u] & &V \times U\, . \ar@{^{(}->}[u]
}
\] 
Denote by $q_{ij}$ the projections from $X_U  \times U \times Y_U $ onto the $i$-th and $j$-th factors, and by $p_1$ and $p_2$ the two projections from $X_U  \times Y_U $.
Then, the restriction of the left-hand side in \eqref{LOeq0} to $X_U  \times U \times Y_U $ is isomorphic to
\[
\alpha_0^*((\R{u_X}_* \OO_{Y'})|_{V \times U}) \stackrel{\L}{\otimes } q_{13}^* \E_U \simeq \alpha_0^*\R{u_{X, U}}_* \OO_{U} \stackrel{\L}{\otimes } q_{13}^* \E_U
\]
where $u_{X, U} \colon U  \to V \times U$ is the restriction of $u_X$. Consider the cartesian diagram
\[
\xymatrixcolsep{3.5pc}
\xymatrix{ 
X_U  \times Y_U  \ar[r]^-{(\psi^{-1} \circ s_X) \circ p_1} \ar[d]^{\delta_0} &U \ar[d]^{u_{X,U}} \\
X_U  \times U \times Y_U  \ar[r]^-{\alpha_0} & V \times U\, .
}
\] 
By flat base change and projection formula, one has
\[
\alpha_0^*\R{u_{X, U}}_* \OO_{U} \stackrel{\L}{\otimes } q_{13}^* \E_U \simeq \R{\delta_0}_* \OO_{X_U  \times Y_U } \stackrel{\L}{\otimes } q_{13}^*\E_U \simeq \R{\delta_0}_* \delta_0^* q_{13}^*\E_U \simeq \R{\delta_0}_* \E_U. 
\]
Similarly, the restriction of the right-hand side in \eqref{LOeq0} to $X_U  \times U \times Y_U $ is isomorphic to $\R{\delta_1}_* \E_U$. 
In this way, we get an isomorphism
\begin{equation}\label{lc4}
 \R{\delta_0}_* \E_U \xrightarrow{\simeq} \R{\delta_1}_* \E_U.
\end{equation}
Let us also note that 
 the pushforward of \eqref{lc4} through the projection  $q_{13}$ 
\begin{equation}\label{lc5}
\E_U \simeq  \R{q_{13}}_* \R{\delta_0}_* \E_U \to \R{q_{13}}_* \R{\delta_1}_* \E_U  \simeq \E_U
\end{equation}
is the identity morphism of $\E_U$ (\emph{cf}.\! \cite[\S 4.15]{lieblich+olsson:2022}).

Moreover, as  by Claim \ref{rel0} the functor $\Phi_{\E_U}$ is  in particular fully faithful, 
 it holds  true that
${\rm Ext}^i\big(\L i_x^*(\E_U), \L i_x^*(\E_U)\big) = {\rm Ext}^i\big(\Phi_{\E_U}(\mathbb C(x)), \Phi_{\E_U}(\mathbb C(x))\big) = 0$ 
for all $i < 0$ and for any closed point $x \in X_U $, where $i_x \colon  \{x\} \times Y_U  \hookrightarrow X_U  \times Y_U $. Therefore, 
by cohomology and base change for complexes \cite[\S 7.7]{EGA:III}, one has that
  $\R {p_1}_* \R\mathcal{H}om(\E_U, \E_U)$ lies in $\DD^{\geq 0}(X_U )$.  
	
 Hence:
\begin{equation}\label{condabc}
\R{\delta_0}_* \E_U \simeq \R{\delta_1}_* \E_U, \quad \quad   \eqref{lc5}\, \mathrm{holds}, \quad \quad  \R {p_1}_* \R\mathcal{H}om(\E_{U}, \E_{U}) \in \DD^{\geq 0}(X_U ). 
\end{equation}
 At this point the statement follows from \cite[Theorem 1.1]{lieblich+olsson:2022}.
\end{proof}

\section{Irregular  fibrations under derived equivalence}\label{sec:chipos}

We begin by recalling some definitions. We continue to denote by $X$  a   smooth projective complex variety of dimension $n$.
The irregularity of a normal projective variety is defined as  the irregularity of any of its  resolution of singularities.

A \emph{non-singular representative of}  
a  fibration $f	\colon X \to V$ onto a normal projective variety $V$ is  a commutative diagram
\[ \xymatrix{\widetilde X \ar[d]^-{\widetilde f}\ar[r]^{\rho} & X  \ar[d]^-{f}\\
\widetilde{V}\ar[r]^{\theta} & V}  \]
where $\rho$ and $\theta$ are birational morphisms from smooth projective varieties and $\widetilde f$ is a fibration.
We define the abelian subvariety 
\begin{equation*}
\widehat{B_V}  \;  \stackrel{{\rm def} }{=}  \; \rho_*\widetilde f^* \Pic0 \widetilde V \; \subset \; \Pic0 X.
\end{equation*}

%

\begin{theorem}\label{chiposi}
Let $X$ and $Y$ be smooth projective varieties and 
let  $\Phi \colon \DD(X) \to \DD(Y)$ be an equivalence.
If $f\colon X\to V$ is an irregular fibration such that $\widehat{B_V}$ is Rouquier-stable,
then  $Y$ admits a fibration $h\colon Y \to W$ onto a variety $W$ that is birational to $V$. Moreover, 
the general fibers of  $f$ and $h$ are derived equivalent.
\end{theorem}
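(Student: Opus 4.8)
The plan is to deduce the statement from Theorem \ref{step1}, applied to the Rouquier-stable subvariety $\widehat{B_X} := \widehat{B_V}$. That theorem immediately provides the Stein factorizations $s_X \colon X \to X'$, $b_X' \colon X' \to B_X$ and $s_Y \colon Y \to Y'$, $b_Y' \colon Y' \to B_Y$, an isomorphism $\psi \colon Y' \xrightarrow{\sim} X'$, and --- by part $(iii)$ --- the derived equivalence of the general fibers of $s_X$ and $s_Y$. The whole geometric content then reduces to one claim: the Stein factorization $s_X$ of the Rouquier-stable morphism $b_X = \pi_{B_X} \circ a_X$ coincides, as a fibration of $X$, with the given $f \colon X \to V$. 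Once this is established, one sets $h := s_Y$ and $W := Y'$.

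To prove the claim I would first describe $b_X$ geometrically. Fix a non-singular representative $\rho \colon \widetilde X \to X$, $\theta \colon \widetilde V \to V$, $\widetilde f \colon \widetilde X \to \widetilde V$ of $f$. Since $\widetilde f$ is a fibration we have $\widetilde f_* \OO_{\widetilde X} = \OO_{\widetilde V}$, so $\widetilde f^* \colon \Pic0 \widetilde V \to \Pic0 \widetilde X$ is injective; hence $\widehat{B_V} \cong \Pic0 \widetilde V$ and, dually, $B_X \cong \Alb \widetilde V$ with $\pi_{B_X}$ identified with the functorial morphism $\Alb(\widetilde f) \colon \Alb \widetilde X \to \Alb \widetilde V$ induced by $\widetilde f$. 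As $\rho$ induces an isomorphism on Albanese varieties and the Albanese construction is functorial, this yields the key identity $b_X \circ \rho = \pi_{B_X} \circ a_X \circ \rho = \Alb(\widetilde f) \circ a_{\widetilde X} = a_{\widetilde V} \circ \widetilde f$.

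Next I would compute $X'$; this is the crux of the argument. As $X$ is smooth, $\rho_* \OO_{\widetilde X} = \OO_X$, so $b_X$ and $b_X \circ \rho$ share the same finite part $X' = \mathrm{Spec}_{B_X}\big((b_X)_* \OO_X\big)$, and $s_X \circ \rho$ is the connected part of $b_X \circ \rho$. Feeding in $b_X \circ \rho = a_{\widetilde V} \circ \widetilde f$ and $\widetilde f_* \OO_{\widetilde X} = \OO_{\widetilde V}$ gives $X' = \mathrm{Spec}_{B_X}\big((a_{\widetilde V})_* \OO_{\widetilde V}\big)$, the finite part of the Stein factorization of $a_{\widetilde V}$. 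Here the hypothesis that $V$ has maximal Albanese dimension enters decisively: $a_{\widetilde V}$ is then generically finite onto its image, so its connected part $\eta \colon \widetilde V \to X'$ is birational. In particular $X'$ is birational to $V$, and $s_X \circ \rho = \eta \circ \widetilde f$. This step is the main obstacle, since it is precisely what prevents the connected part of the Stein factorization of the purely category-theoretic $b_X$ from being a proper further contraction of $V$ and thereby pins $X'$ to the birational class of $V$.

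Finally, combining $s_X \circ \rho = \eta \circ \widetilde f$ with $f \circ \rho = \theta \circ \widetilde f$ and cancelling the dominant $\rho$, I obtain $s_X = \sigma \circ f$ as rational maps, where $\sigma := \eta \circ \theta^{-1} \colon V \dashrightarrow X'$ is birational. Thus $f$ and $s_X$ induce the same fibration of $X$: for general $v$ the irreducible fiber $f^{-1}(v)$ is carried by $s_X$ to the single point $\sigma(v)$, so $f^{-1}(v) \subseteq s_X^{-1}(\sigma(v))$, and a dimension count ($\dim V = \dim X'$) together with generic smoothness forces equality of these reduced subschemes. Hence the general fiber of $f$ equals a general fiber of $s_X$. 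Setting $h := s_Y \colon Y \to Y' =: W$, Theorem \ref{step1}$(ii)$ gives $W \cong X'$ birational to $V$, while Theorem \ref{step1}$(iii)$ gives that $s_X^{-1}(\psi(y'))$ and $s_Y^{-1}(y')$ are derived equivalent for general $y'$; since the general points match across the dense open loci determined by $\sigma$ and by $(iii)$, the general fibers of $f$ and $h$ are derived equivalent, as desired.
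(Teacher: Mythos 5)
Your proposal is correct and takes essentially the same route as the paper's proof: both apply Theorem \ref{step1} to $\widehat{B_V}$, use $\rho_*\OO_{\widetilde X} = \OO_X$ and $\widetilde f_*\OO_{\widetilde X} = \OO_{\widetilde V}$ to identify $X'$ with the base of the Stein factorization of $a_{\widetilde V}$ (birational to $\widetilde V$, hence to $V$, by the maximal Albanese dimension hypothesis), and then set $h := s_Y$ and $W := Y'$, invoking parts $(ii)$ and $(iii)$ of Theorem \ref{step1}. Your additional details --- the identification of $\pi_{B_X}$ with $\Alb(\widetilde f)$ and the explicit fiber-matching via the birational $\sigma = \eta \circ \theta^{-1}$ --- merely spell out what the paper compresses into the statement that $\widetilde f$ is a non-singular representative of $s_X$ as well.
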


\begin{proof}
Consider the commutative diagram
\[
\xymatrix{\widetilde X \ar[r]^-{a_{\widetilde X}} \ar[d]_-{\widetilde f} & \Alb \widetilde X \ar[d]^-{\pi} \\
\widetilde V \ar[r]^-{a_{\widetilde V}} &\Alb \widetilde V
}
\]
where $\pi$ is the fibration induced by $\widetilde f$ at the level of   Albanese varieties.
By definition  there is an isomorphism $B_V \cong \Alb \widetilde V$ and $\pi \circ a_X = b_X$, where
 $b_X \colon X \to B_V$ is the Rouquier-stable morphism induced  by $\widehat{B_V} \subset \Pic0 X$ (see \S \ref{sectionRunmixed}).
Since $a_{\widetilde X} = ( a_X \circ \rho) \colon \widetilde X \to X \to \Alb X \simeq \Alb \widetilde X$ and $\rho_*\OO_{\widetilde X} = \OO_X$, we have that 
$X'$ (notations as in \eqref{xy}) is isomorphic to  the base of the Stein factorization of  $\pi \circ a_{\widetilde X} = a_{\widetilde V} \circ \widetilde f$. 
But ${\widetilde f}_* \OO_{\widetilde X} = \OO_{\widetilde V}$, 
hence $X'$ is also  isomorphic to the base of the Stein factorization of $a_{\widetilde V}$, which is birational to $\widetilde V$  as   $a_{\widetilde V}$ is generically finite onto its image. 
Namely, $\widetilde f$ is a non-singular representative of the fibration $s_X \colon X \to X'$ too,
and we have the following commutative diagram
\begin{equation}\label{construction}
\xymatrix{
\widetilde{X} \ar[d]^-{\widetilde{f}} \ar@/^2pc/[rr]^-{a_{\widetilde X}} \ar[r]^-{\rho} &X \ar[d]^-{s_X} \ar[r]^{a_X}  &\Alb \widetilde{X} \ar[d]^-{\pi} \\
 \widetilde{V} \ar@/_2pc/[rr]^-{a_{\widetilde V}} 
\ar[r] &X' \ar[r]^-{b_X'} &\Alb \widetilde{V}\, .
}
\end{equation}
In particular, $X'$ is birational  to $V$.

Let $\widehat{B_Y}:= \varphi (\widehat{B_V})$ and $b_Y \colon Y \to B_Y$ be the corresponding Rouquier-stable morphism.
Moreover, consider the Stein factorization of $b_Y$:
\begin{equation*}
\xymatrix{
 Y \ar@/^2pc/[rr]^-{b_Y} \ar[r]^-{s_Y} & Y' \ar[r] &  B_Y
 } 
\end{equation*}
where the first morphism is a fibration and the second is finite onto its image.
 By Theorem \ref{step1}, there exists an isomorphism 
$X' \simeq Y'$. In particular, by taking $h:= s_Y \colon Y \to W:= Y'$, we have that
$V$ and $W$ are birational. The second statement follows from the above construction and Theorem \ref{step1}(iii).
\end{proof}

 Recall from the Introduction that 
two irregular fibrations $f_1 \colon X \to V_1$  and $f_2 \colon X \to V_2$ are  \emph{equivalent} 
if there exists a birational map 
 $\sigma \colon V_1 \dashrightarrow V_2$ 
such that $f_2=\sigma \circ f_1$.
We record for later use  the following consequence of the proof of  the previous theorem. 
\begin{lemma}\label{lemmaX10}
The irregular fibration $f \colon X \to V$ we started with  is equivalent to $s_X$. In particular, the general fibers of $f$ and $s_X$ are isomorphic.
\end{lemma}

It turns out that irregular fibrations onto varieties of general type provide a natural geometric framework where the Rouquier-stableness
 assumption of Theorem \ref{chiposi} is automatically satisfied. 
\begin{lemma}\label{lemmagentypeRunv}
If $f \colon X \to V$ is an irregular fibration and $V$ is of general type, then the associated abelian variety $\widehat{B_V}$ is Rouquier-stable with respect to any equivalence.
\end{lemma}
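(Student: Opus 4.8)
The plan is to place $\widehat{B_V}$ inside the Popa--Schnell Rouquier-stable subvariety attached to the affinization of $\Aut0 X$, as described in \S\ref{subs:num}. Set $A:=\Alb(\Aut0 X)$, let $T:=\mathrm{im}(A\to\Alb X)$ be the subtorus of translations induced by the action of $\Aut0 X$ on $X$, and let $\pi\colon\Pic0 X\to\widehat A$ be the morphism dual to $A\to\Alb X$, so that $\ker\pi$ is the annihilator of $T$. As recalled in \emph{loc.\ cit.} (following \cite{popa+schnell:invariance}), the identity component $(\ker\pi)_0$ is a Rouquier-stable abelian subvariety of $\Pic0 X$. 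Since any abelian subvariety of a Rouquier-stable subvariety is again Rouquier-stable, it suffices to prove $\widehat{B_V}\subseteq(\ker\pi)_0$. Dualizing $0\to\ker\pi_{B_V}\to\Alb X\to B_V\to 0$, where $\pi_{B_V}\colon\Alb X\to B_V$ is the surjection dual to $\widehat{B_V}\subseteq\Pic0 X$, identifies $\widehat{B_V}$ with the annihilator of $\ker\pi_{B_V}$. Hence the whole statement reduces to the single inclusion $T\subseteq\ker\pi_{B_V}$: it forces $\widehat{B_V}\subseteq\ker\pi$, and $\widehat{B_V}$, being connected through the origin, then lands in $(\ker\pi)_0$.

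First I would identify the base of the Stein factorization of $b_X=\pi_{B_V}\circ a_X\colon X\to B_V$. Choosing a non-singular representative $\widetilde f\colon\widetilde X\to\widetilde V$ of $f$ and using $\rho_*\OO_{\widetilde X}=\OO_X$ together with the commutativity $\pi_{B_V}\circ a_{\widetilde X}=a_{\widetilde V}\circ\widetilde f$, the Stein base $X'$ of $b_X$ coincides with that of $a_{\widetilde V}\circ\widetilde f$; since $\widetilde f_*\OO_{\widetilde X}=\OO_{\widetilde V}$, it coincides with the Stein base of $a_{\widetilde V}$, which is birational to $\widetilde V$ because $\widetilde V$ has maximal Albanese dimension. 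This is the same identification appearing in the proof of Theorem \ref{chiposi}, but note that here it does not presuppose Rouquier-stability of $\widehat{B_V}$.

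The key step is to extract the inclusion $T\subseteq\ker\pi_{B_V}$ from general type of $V$, via the automorphisms of this base. For $g\in\Aut0 X$ one has $a_X\circ g=\tau_{t_g}\circ a_X$ with $t_g\in T$, whence $b_X\circ g=\tau_{\pi_{B_V}(t_g)}\circ b_X$, where $\tau$ denotes translation on $B_V$. By functoriality of the Stein factorization this action descends to an action of $\Aut0 X$ on the normal projective variety $X'$, with $b_X'\circ\bar g=\tau_{\pi_{B_V}(t_g)}\circ b_X'$. As $X'$ is birational to the general type variety $\widetilde V$, its automorphism group is finite (it injects into $\mathrm{Bir}(\widetilde V)$, finite by Matsumura's theorem \cite{matsumura:gentype}); hence the morphism from the connected group $\Aut0 X$ to $\mathrm{Aut}(X')$ is trivial. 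Feeding $\bar g=\mathrm{id}_{X'}$ back into $b_X=b_X'\circ s_X$ yields $\pi_{B_V}(t_g)=0$ for every $g$, and since the $t_g$ generate $T$ we conclude $T\subseteq\ker\pi_{B_V}$, as required.

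I expect the descent in the third paragraph to be the only genuinely delicate point: one must check that $\Aut0 X$ acts on the Stein base $X'$ itself (not merely permuting its fibers), and that general type enters in the sharp form \emph{finite birational automorphism group}, which is exactly what excludes a non-trivial connected group of translations of $B_V$ preserving the Albanese image of $\widetilde V$. The remaining ingredients---the annihilator description of $\widehat{B_V}$ and the Rouquier-stability of $(\ker\pi)_0$, quoted from \S\ref{subs:num}---are formal. It is worth stressing that this argument is uniform in the geometry of the fibers of $f$: unlike a direct approach through the cohomological support loci of \S\ref{cohsupploci}, it neither requires $\chi(\omega_{\widetilde V})>0$ nor is obstructed by the torsion-translate components of $V^0(\widetilde V,\omega_{\widetilde V})$ that occur when $\chi(\omega_{\widetilde V})=0$, and it applies equally when $X$ has negative Kodaira dimension.
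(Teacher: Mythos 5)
Your proof is correct, but it takes a genuinely different route from the paper's. The paper argues through generic vanishing theory: Koll\'ar's decomposition theorem gives $\rho_*\widetilde{f}^*V^0(\widetilde V,\omega_{\widetilde V})\subseteq V^k(X,\omega_X)$; Brion's theory of non-affine group actions (via \cite{lombardi:invariants}) places $V^k(X,\omega_X)$ in the annihilator $\ker(g^*)$ of the torus swept out by the abelian variety $A\subseteq \Aut0 X$ defined as the image of $\Pic0 Y$ under $\varphi^{-1}$; and general type enters only through Chen--Hacon's theorem that $V^0(\widetilde V,\omega_{\widetilde V})$ generates $\Pic0 \widetilde V$, whence $\widehat{B_V}\subseteq (\ker g^*)_0$, which is Rouquier-stable by Popa--Schnell. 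You reach the same endpoint with no generic vanishing input at all: you descend the $\Aut0 X$-action to the Stein base $X'$ of $b_X$ (which, as you correctly note, is identified with a birational model of $\widetilde V$ without presupposing Rouquier-stability), kill the action by Matsumura's finiteness of the birational automorphism group of a general type variety, and deduce $T\subseteq\ker\pi_{B_V}$, i.e.\ $\widehat{B_V}\subseteq\mathrm{Ann}(T)$. The descent step you flag as delicate is indeed the crux and your sketch is right: an element $g\in\Aut0 X$ sends connected components of fibers of $b_X$ to connected components of fibers over translated points, so $s_X\circ g$ factors through $s_X$ by rigidity ($s_X$ proper with $s_{X*}\OO_X=\OO_{X'}$ and $X'$ normal), and triviality of the image in the finite group $\mathrm{Aut}(X')$ follows from connectedness since each set $\{g : \bar g=h\}$ is closed. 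One attribution nuance: the footnote in \S\ref{subs:num} you invoke really concerns the annihilator of the torus swept out by Popa--Schnell's $A$ (the image of $\Pic0 Y$ in $\Aut0 X$), not of the full translation torus $T$ of $\Alb(\Aut0 X)$; but since $T$ contains that torus, your $(\ker\pi)_0=\mathrm{Ann}(T)$ is an abelian subvariety of the paper's Rouquier-stable $(\ker g^*)_0$, so its stability follows exactly as you say from the (correct) remark that abelian subvarieties of Rouquier-stable subvarieties are Rouquier-stable --- both proofs ultimately funnel through this same Popa--Schnell black box. As for what each approach buys: yours is more elementary and makes transparent that general type is used only in the sharp form \emph{finite birational automorphism group}, uniformly in $\chi(\omega_{\widetilde V})$; the paper's generic vanishing route, on the other hand, is what powers the extension beyond general type bases, since Chen--Hacon's Theorem 2.3 yields Rouquier-stability of the Iitaka part $\widehat{\pi}\big(\widehat{\kappa}(\Pic0 Z_V)\big)$ in Theorem \ref{thmnotgentype}, a refinement your automorphism argument would recover only in weakened form because $\mathrm{Aut}(X')$ need not be finite there.
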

\begin{proof}
We aim to prove that $\widehat{B_V}$  is contained in a Rouquier-stable subvariety. 
Take notations as in Subsection \ref{cohsupploci}.
 By Koll\'ar's decomposition theorem \cite{kollar:decomposition} one has that   $\widetilde{f}^*V^0(\widetilde V, \omega_{\widetilde V})  \subseteq V^k(\widetilde X, \omega_{\widetilde X})$, where $k$ is the dimension of the generic fiber of $\widetilde f$ (see \cite[Lemma 6.3]{lombardi:invariants}). Therefore, ${\rho}_*\widetilde{f}^*V^0(\widetilde V, \omega_{\widetilde V})  \subseteq V^k(X, \omega_{X})$.

The Rouquier isomorphism $\varphi$ induces a map
\[
\Pic0\, Y \to \Aut0 X, \quad  \beta \mapsto p_{\Aut0 X} ( \varphi^{-1} ( \rm{id}_Y, \beta ) ) 
\]
whose image is an abelian variety denoted by $A \subseteq \Aut0 X$.
If $A$ is trivial, then $\Pic0 X$ is Rouquier-stable by definition. So we may assume that $\dim A > 0$.
Now take a general point $x_0 \in X$ and consider the orbit map
\[
g \colon A \to X, \quad \xi \mapsto \xi(x_0).
\]
Using Brion's results on the action of  a non-affine algebraic group on  smooth projective varieties
(\cite{brion:affine}, see also \cite[\S 2]{popa+schnell:invariance}),
it can be proved that $V^k (X, \omega_X)$ is contained in the subgroup $\ker \big( g^* \colon \Pic0 X \to \widehat A\big)$ of $\Pic0 X$ (see \cite[p.\ 524, (8)]{lombardi:invariants}). In particular this yields  the inclusion  ${\rho}_* \widetilde{f}^* V^0(\widetilde V, \omega_{\widetilde V}) \subseteq \ker(g^*)$.

By assumption $\widetilde V$ is of maximal Albanese dimension and of general type, 
 hence \cite[Theorem 1]{chen+hacon:pluricanonical} says  that  $V^0(\widetilde V, \omega_{\widetilde V})$ generates $\Pic0 \widetilde V$ as a group. 
Therefore,  from the above discussion we get  
\[
\widehat{B_V} = {\rho}_*\widetilde{f}^*\Pic0 \widetilde V \subseteq \ker(g^*).
\]
Moreover, since $\widehat{B_V}$ is an abelian subvariety, it is actually contained in the connected component $(\ker(g^*))_0$ of $\ker(g^*)$ through the origin. 
Now we employ the fact that   $(\ker(g^*))_0$ is Rouquier-stable as in \cite[p.\ 533]{popa+schnell:invariance}.

\end{proof}

\subsection{Proof of Theorem \ref{Thmfibrintro} $(i)$ and $(ii)$}
The proof of  Theorem \ref{Thmfibrintro} $(i)$ and  $(ii)$  follows 
from Theorem \ref{chiposi} and  Lemma \ref{lemmagentypeRunv}.
\qed

\subsection{Proof of Theorem \ref{Thmfibrintro} $(iii)$} \label{proofkawgen}

 Let $s_X \colon X \to X'$ (\emph{resp.} $s_Y \colon Y \to Y'$) be the fibration induced by $\widehat{B_V}$  
(\emph{resp.} $\varphi (\widehat{B_V})$). 
We know that $$W \; :=\; Y^\prime \; \simeq \; X^\prime$$ thanks to Theorem \ref{step1} and, moreover,  
\begin{equation}\label{comkaw}
\Supp (\E)\subseteq X \times_W Y,
\end{equation}
where $\E \in \DD( X \times Y)$ is the kernel of the   equivalence.
At this point, the proof is a relative version of Kawamata's technique \cite{kawamata:dequivalence}.
Let $Z \subseteq \Supp (\E)$ be an irreducible component  such that the first projection $\pi_X \colon Z \rightarrow X$ is surjective  (see \cite[Corollary 6.5]{huybrechts:fouriermukai}).
In particular, the inequality  $\dim X \leq \dim Z$ holds. Denote by $\pi_Y \colon Z \rightarrow Y$ the second projection.
From \eqref{comkaw} we get a \emph{commutative} diagram
\begin{equation}\label{relK00}
\xymatrix{
     &Z \ar[dl]_-{\pi_X} \ar[dr]^-{\pi_Y}  & \\
X \ar[dr]^-{s_X} & &Y \ar[dl]_-{s_Y} \\
 &W &		
}
\end{equation}
Note that, for
 any point $w \in W$, one has 
\[
\pi_X^{-1}(s_X^{-1}(w)) = Z \cap (s_X^{-1}(w) \times s_Y^{-1}(w)) \subseteq \Supp(\E) \cap (s_X^{-1}(w) \times s_Y^{-1}(w)) = \Supp( \mathbf L\iota^*\E)
\]
where $\iota \colon s_X^{-1}(w) \times s_Y^{-1}(w) \hookrightarrow X \times Y$ is the inclusion map 
(the last equality is \cite[Lemma 3.29]{huybrechts:fouriermukai}).
Thanks to Lemma \ref{lemmaX10}, for a general $w \in W$ the (anti)canonical bundle of $s_X^{-1}(w)$ is big, which implies, by an argument of Kawamata in   \cite{kawamata:dequivalence}  that the morphism $\pi_Y^{-1}(s_Y^{-1}(w)) \xrightarrow{\pi_Y} s_Y^{-1}(w)$ is generically finite.
We briefly sketch this for reader's convenience. Set  $X_w := s_X^{-1}(w)$,  $Y_w := s_Y^{-1}(w)$, and $Z_w := Z \cap (s_X^{-1}(w) \times s_Y^{-1}(w))$ for a general point $w \in W$. Moreover, let $\nu_w \colon \widetilde{Z_w} \rightarrow Z_w$ be the normalization and 
assume that $\omega_{X_w}$ is big (the other case is completely analogue). By Kodaira's lemma, for $m \gg 0$ one has that
$\omega_{X_w}^{\otimes m} \simeq  \OO_{X_w}(H) \otimes \OO_{X_w}(D)$, where $H$ is an ample divisor and $D$ is an effective divisor on $X_w$.
We now prove that the morphism 
$(\pi_Y \circ \nu_w) \colon \widetilde{Z_w} \setminus \nu_w^{-1}\pi_X^{-1}(D) \to Y_w$ is finite. Suppose by contradiction that there exists an irreducible curve $C \subseteq \widetilde{Z_w}$ contracted by $\pi_Y \circ \nu_w$ and such that $C \not\subset \nu_w^{-1}\pi_X^{-1}(D)$. 
Then $$0 \; = \; \deg \big( \nu_w^{*}\pi_Y^{*}(\omega_{Y_w}) \big) \big|_C\;  = \;  \deg \big( \nu_w^{*}\pi_X^{*}(\omega_{X_w}) \big) \big|_C 
\; \geq \; \frac{1}{m} \deg \big( \nu_w^{*}\pi_X^{*}\OO(H) \big) \big|_C \; > \;  0,$$ where the second equality is  an application of \cite[Lemma 6.6]{huybrechts:fouriermukai}. 

In particular, $\pi_Y \colon Z \rightarrow Y$ is generically finite and $\dim Z \leq \dim Y$. But we already know that 
\[
\dim Y = \dim X \leq \dim Z.
\]
Therefore, $\dim X = \dim Z$. At this point another well-established argument due to Kawamata \cite{kawamata:dequivalence} says that $X$ and $Y$ are $K$-equivalent (see also \cite[p.\ 149]{huybrechts:fouriermukai}, or \cite[Lemma 15]{lombardipopa:conj}).
This concludes the proof of $(iii)$.

The argument we just employed
 also provides a further generalization of Kawamata's birational reconstruction theorem (see the Introduction).
\begin{theorem}\label{relkawintro}
Let $\Phi \colon \DD(X) \to \DD(Y)$ be an equivalence and let $\widehat{B_X} \subset \Pic0 X$ be a
 Rouquier-stable subvariety. 
If the (anti)canonical line bundle of the general fiber of the Rouquier-stable morphism $b_X$ is big, 
 then $X$ and $Y$ are $K$-equivalent. 
\end{theorem}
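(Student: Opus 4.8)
The plan is to run the relative version of Kawamata's birational reconstruction argument that was already carried out in the proof of Theorem~\ref{Thmfibrintro}$(iii)$, now starting directly from the Rouquier-stable morphism $b_X$ rather than from an irregular fibration. First I would form the Stein factorizations of $b_X$ and $b_Y$ as in \eqref{xy}, obtaining fibrations $s_X \colon X \to X'$ and $s_Y \colon Y \to Y'$. By Theorem~\ref{step1} there is an isomorphism $\psi \colon Y' \xrightarrow{\sim} X'$; setting $W := X' \simeq Y'$, the same theorem yields the crucial containment $\Supp(\E) \subseteq X \times_W Y$, where $\E \in \DD(X \times Y)$ is the kernel of $\Phi$. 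Since $b_X = b_X' \circ s_X$ with $b_X'$ finite onto its image, the general fiber of $b_X$ is, connected-component by connected-component, a general fiber of $s_X$; hence the bigness hypothesis on the (anti-)canonical bundle of the general fiber of $b_X$ is equivalent to the same bigness for the general fiber $X_w := s_X^{-1}(w)$, with $w \in W$ general.

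Next I would choose an irreducible component $Z \subseteq \Supp(\E)$ whose first projection $\pi_X \colon Z \to X$ is surjective (\cite[Corollary 6.5]{huybrechts:fouriermukai}), so that $\dim X \le \dim Z$. The containment $\Supp(\E) \subseteq X \times_W Y$ forces $s_X \circ \pi_X = s_Y \circ \pi_Y$, giving the commutative roof diagram over $W$ with $\pi_Y \colon Z \to Y$ the second projection. For general $w \in W$ I would restrict to the fibers, writing $Y_w := s_Y^{-1}(w)$ and $Z_w := Z \cap (X_w \times Y_w)$, and use that $Z_w \subseteq \Supp(\mathbf{L}\iota^*\E)$, where $\iota \colon X_w \times Y_w \hookrightarrow X \times Y$ is the inclusion (\cite[Lemma 3.29]{huybrechts:fouriermukai}).

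The main step, and the place where the bigness hypothesis enters, is showing that $\pi_Y$ is generically finite. This is exactly Kawamata's technique applied fiberwise: assuming $\omega_{X_w}$ big, Kodaira's lemma yields $\omega_{X_w}^{\otimes m} \simeq \OO_{X_w}(H) \otimes \OO_{X_w}(D)$ with $H$ ample and $D$ effective for $m \gg 0$, and if $\pi_Y$ contracted a curve $C$ on the normalization $\nu_w \colon \widetilde{Z_w} \to Z_w$ not lying in $\nu_w^{-1}\pi_X^{-1}(D)$, then the comparison $\deg(\nu_w^* \pi_Y^* \omega_{Y_w})|_C = \deg(\nu_w^* \pi_X^* \omega_{X_w})|_C \ge \tfrac{1}{m}\deg(\nu_w^*\pi_X^* \OO(H))|_C > 0$ contradicts the contraction, the key equality being \cite[Lemma 6.6]{huybrechts:fouriermukai}; the anticanonical case is identical. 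I expect the only genuine subtlety to be bookkeeping the relative setup so that this fiberwise comparison is legitimate for general $w$, but since all of this is precisely the argument reproduced above for Theorem~\ref{Thmfibrintro}$(iii)$, it goes through \emph{verbatim}. Generic finiteness of each $\pi_Y|_{Z_w}$ gives $\pi_Y \colon Z \to Y$ generically finite, hence $\dim Z \le \dim Y = \dim X \le \dim Z$, so $\dim Z = \dim X = \dim Y$. Finally I would invoke Kawamata's criterion \cite{kawamata:dequivalence} (see also \cite[Lemma 15]{lombardipopa:conj}) to conclude that $X$ and $Y$ are $K$-equivalent.
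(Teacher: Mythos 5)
Your proposal is correct and is essentially identical to the paper's own proof: the authors deduce Theorem \ref{relkawintro} by running the fiberwise Kawamata argument from the proof of Theorem \ref{Thmfibrintro}$(iii)$ verbatim, starting from the Stein factorizations of the pair of Rouquier-stable morphisms and the containment $\Supp(\E) \subseteq X \times_W Y$ supplied by Theorem \ref{step1}, then choosing a component $Z$ dominating $X$, proving generic finiteness of $\pi_Y$ via Kodaira's lemma and \cite[Lemma 6.6]{huybrechts:fouriermukai}, and concluding $K$-equivalence from $\dim Z = \dim X = \dim Y$ by Kawamata's criterion. Your remark that the bigness hypothesis on the general fiber of $b_X$ transfers to the general fiber of $s_X$ (because $b_X'$ is finite onto its image, so fibers of $b_X$ are disjoint unions of fibers of $s_X$) is precisely the small piece of bookkeeping the paper leaves implicit when it says the argument ``just employed'' applies.
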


\begin{remark} If $\omega_X$ (\emph{resp}. $\omega_X^{-1}$) is big as in Kawamata's theorem, then $\Aut0 X$ is an affine algebraic group (see Subsection \ref{subs:aut}). Hence the whole 
$\Pic0 X$ is 
 Rouquier-stable  and   $\omega_X$ (\emph{resp.} $\omega_X^{-1}$) is obviously  $b_X$-big (note that $b_X = a_X$ if $\widehat{B_X} = \Pic0 X$).
Moreover, as recalled in Subsection \ref{subs:aut}, varieties with non-zero Euler characteristic have affine automorphism group $\Aut0(-)$. Hence Corollary \ref{corintro} of the Introduction is a particular case of the above Theorem \ref{relkawintro}.
\end{remark}

\subsection{Proof of Theorem \ref{introbij}}

An  \emph{irregular} $k$\emph{-fibration} is an irregular fibration  onto a variety of dimension $k$.
For any  variety $X$ and 
integer $0< k< n := 	\dim X$
 we  define the following set:
$$G_X  \stackrel{{\rm  def}}{=}  \{ \, 
  \mbox{equivalence classes of irregular }k\mbox{-fibrations }f\colon X \rightarrow V$$ $$ \mbox{ such that   $V$ is of general type and 
  $0< k< \dim X$}\, \}.
$$

We aim to prove Theorem \ref{introbij}. Indeed, we prove the more precise Theorem \ref{thm:corr} below.
\begin{theorem}\label{thm:corr}
Let $\Phi \colon \DD(X) \to \DD(Y)$ be a derived equivalence.
There exists a bijective  correspondence $\mu_{\Phi} \colon G_X \to G_Y$ such that if 
$\mu_{\Phi}(f\colon X \to V) = ( h \colon Y \to W )$, then the varieties $V$ and $W$ are birational. 
Moreover, the generic fibers of $f$ and $h$ are derived equivalent.
\end{theorem}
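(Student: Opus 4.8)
The plan is to identify $G_X$ with a set of Rouquier-stable subvarieties of $\Pic0 X$ via the assignment $[f\colon X \to V]\mapsto \widehat{B_V}$, and then to transport this identification across the Rouquier isomorphism $\varphi$. First I would check that this assignment is well-defined and injective on equivalence classes. Well-definedness is immediate: if $f_1\colon X\to V_1$ and $f_2\colon X\to V_2$ are equivalent through a birational map $\sigma\colon V_1\dashrightarrow V_2$, then a compatible choice of non-singular representatives computes $\widehat{B_{V_1}}=\widehat{B_{V_2}}$ inside $\Pic0 X$, since $\Pic0$ is a birational invariant of smooth projective varieties. By Lemma~\ref{lemmagentypeRunv} this subvariety is Rouquier-stable. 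Injectivity is exactly the content of diagram~\eqref{construction}: the Stein factorization $s_X\colon X\to X'$ of the Rouquier-stable morphism $b_X$ attached to $\widehat{B_V}$ admits $\widetilde f$ as a non-singular representative, so $f$ is equivalent to $s_X$; hence two fibrations in $G_X$ with the same associated subvariety are equivalent.

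Next I would define the correspondence. Given $[f\colon X\to V]\in G_X$, set $\widehat{B_Y}:=\varphi(\widehat{B_V})$ and let $h:=s_Y\colon Y\to W:=Y'$ be the Stein factorization of the Rouquier-stable morphism $b_Y$ attached to $\widehat{B_Y}$, as produced in Theorem~\ref{chiposi}. That theorem already furnishes the two conclusions of the statement: $W$ is birational to $V$, and the general fibers of $f$ (equivalently of $s_X$) and of $h$ are derived equivalent. Because $V$ is of general type and of maximal Albanese dimension, and both properties are birational invariants, $W$ enjoys them as well; thus $h$ is an irregular $k$-fibration onto a general-type variety and $[h]\in G_Y$. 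Setting $\mu_\Phi([f]):=[h]$ therefore gives a well-defined map $G_X\to G_Y$ satisfying the required properties, the Stein factorization being canonical given $\widehat{B_Y}$.

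It then remains to prove bijectivity, for which I would exhibit the correspondence $\mu_{\Phi^{-1}}\colon G_Y\to G_X$ attached to the inverse equivalence $\Phi^{-1}$ (whose Rouquier isomorphism is $\varphi^{-1}$) as a two-sided inverse. The decisive point to verify is that the subvariety attached to the output fibration $h$ is exactly $\varphi(\widehat{B_V})$, i.e.\ $\widehat{B_W}=\widehat{B_Y}$. This is the $Y$-side analog of diagram~\eqref{construction}: one computes $\widehat{B_W}=\rho_{Y*}\widetilde h^*\Pic0\widetilde W=b_Y^*\Pic0 B_Y=\widehat{B_Y}$, using that $b_Y=\pi_{B_Y}\circ a_Y$ and that $\pi_{B_Y}$ is dual to the inclusion $\widehat{B_Y}\hookrightarrow\Pic0 Y$. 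Granting this identity, $\mu_{\Phi^{-1}}([h])$ is computed from $\varphi^{-1}(\widehat{B_W})=\widehat{B_V}$, whose Stein fibration $s_X$ is equivalent to $f$ by the injectivity step above; hence $\mu_{\Phi^{-1}}\circ\mu_\Phi=\mathrm{id}_{G_X}$, and the symmetric argument gives $\mu_\Phi\circ\mu_{\Phi^{-1}}=\mathrm{id}_{G_Y}$.

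I expect the main obstacle to be precisely this last compatibility $\widehat{B_W}=\varphi(\widehat{B_V})$ --- that the Stein-factorization construction of Theorem~\ref{chiposi} is involutive under passage to the inverse equivalence. Once the abelian subvariety reconstructed from the output fibration is identified with $\varphi(\widehat{B_V})$, the bijectivity is formal, and with it the full statement follows, the birationality of bases and the derived equivalence of general fibers being already recorded in Theorem~\ref{chiposi}.
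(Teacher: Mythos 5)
Your overall architecture coincides with the paper's: you define $\mu_\Phi$ exactly as in Theorem \ref{chiposi} by Stein-factorizing the Rouquier-stable morphism attached to $\varphi(\widehat{B_V})$, you use diagram \eqref{construction} to identify $f$ with $s_X$ up to equivalence, and you reduce bijectivity, via functoriality of the Rouquier isomorphism, to the single identity $\widehat{B_W}=\varphi(\widehat{B_V})$. You correctly isolate that identity as the crux, but your proposed verification of it has a genuine gap. The chain $\widehat{B_W}=\rho_{Y*}\widetilde h^*\Pic0 \widetilde W = b_Y^*\Pic0 B_Y$ does not follow from $b_Y=\pi_{B_Y}\circ a_Y$ and duality alone. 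What is formal is only one inclusion: pulling back $\Pic0 B_Y$ along $\widetilde W \to W \xrightarrow{b'_Y} B_Y$ lands inside $\Pic0 \widetilde W$, so $\widehat{B_Y}\subseteq \widehat{B_W}$. The reverse inclusion requires the dual of the homomorphism $t\colon \Alb \widetilde W \to B_Y$ (induced by $b'_Y$ through the universal property of the Albanese variety) to be surjective, i.e.\ $t$ to have finite kernel, and this is not automatic for the Stein factorization of a morphism to an abelian variety: $b'_Y$ being finite onto its image does not prevent $q(\widetilde W)>\dim B_Y$. For instance, if $Y=W\times F$ with $W$ a genus-$2$ curve admitting a degree-$2$ map to an elliptic curve $E$, and $b_Y\colon Y\to E$ is the composition, then the Stein factorization has base $W$, the finite part $W\to E$ is finite, yet $\rho_{Y*}\widetilde h^*\Pic0 \widetilde W$ strictly contains the pullback of $\widehat E$. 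So, as written, the decisive equality is asserted rather than proved.

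The gap is fillable, and the paper fills it by a different mechanism: Theorem \ref{step1}(ii) supplies the isomorphism $\psi\colon W \xrightarrow{\sim} X'$ commuting with $\widehat{\varphi_\E}$, which combined with \eqref{construction} identifies the composition $\widetilde W \to W \to B_Y$ with the Albanese map $a_{\widetilde W}$; this yields $B_Y\cong \Alb \widetilde W$, and the equality $\widehat{B_W}=\widehat{B_Y}$ follows. Alternatively, you can rescue your computation using an input you already have, namely the birationality of $W$ and $V$ from Theorem \ref{chiposi}: then $q(\widetilde W)=q(\widetilde V)=\dim \widehat{B_V}=\dim\widehat{B_Y}$, and $t$ is surjective because the image of $b_Y$ generates $B_Y$ (as $\pi_{B_Y}$ is surjective and $a_Y(Y)$ generates $\Alb Y$); a surjective homomorphism of abelian varieties of equal dimension is an isogeny, so $\widehat t$ is surjective and the reverse inclusion holds. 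Either way, the missing idea is precisely the identification of $\Alb \widetilde W$ with $B_Y$ --- it is geometric input coming from Theorem \ref{step1}, not a consequence of the duality $\pi_{B_Y}=\widehat{(\widehat{B_Y}\hookrightarrow \Pic0 Y)}$ by itself. The remaining parts of your proposal (well-definedness on equivalence classes, injectivity via $f\sim s_X$, membership $[h]\in G_Y$ by birational invariance of general type and maximal Albanese dimension, and the two-sided inverse $\mu_{\Phi^{-1}}$) are correct and match the paper.
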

In the rest of this section we prove the above theorem.
The function $\mu_{\Phi}$ is defined by Theorem \ref{chiposi}:  
we take the Stein factorization of the Rouquier-stable morphism     $b_Y$
$$b_Y \colon Y \;  \xrightarrow{h := s_Y} \; W := Y'  \; \stackrel{b'_Y}{\longrightarrow}  \;   B_Y,$$
where $\widehat{B_Y} := \varphi ( \widehat{B_V})$. 
In particular, we already know that $V$ and $W$ are birational 
and that  the generic fibers of $f$ and $h$ are derived equivalent.

Now we turn to prove that $\mu_{\Phi}$ is a bijection. Take notations as in the proof of Theorem \ref{chiposi}.
By Theorem \ref{step1}(ii) and \eqref{construction},
there exists an isomorphism of varieties $\psi \colon W \stackrel{\sim}{\longrightarrow} X'$ such that the diagram
\[
\xymatrix{
\widetilde{V}  \ar@/^2pc/[rr]^-{a_{\widetilde V}} \ar[r] &X'  \ar[r]^-{b'_X}  &B_V \simeq \Alb \widetilde{V}  \\
 \widetilde{W} \ar@{-->}[u] \ar[r] &W \ar[u]^-{\psi} \ar[r]^-{b'_Y} &B_Y \ar[u]^-{\widehat{\varphi}}
}
\]
 is commutative. Hence we get that $B_Y \simeq \Alb \widetilde W$, and moreover that 
 the bottom composition is isomorphic to the Albanese map $a_{\widetilde W}$ of a resolution $\widetilde W$ of $W$.

Now let 
\[ \xymatrix{\widetilde Y \ar[d]^-{\widetilde h}\ar[r]^-{\sigma} & Y  \ar[d]^-{h} \ar[r] &\Alb  Y \ar[d] \\
\widetilde{W}\ar[r]^-{\xi} & W \ar[r] & \Alb \widetilde W \simeq B_Y}  \]
where  the left vertical morphism is a non-singular representative of the irregular fibration $h$, and the right-hand one is the fibration induced by the universal property of the Albanese variety.
Then  we have
$\widehat{B_Y} \simeq \sigma_* \widetilde{h}^* \Pic0 \widetilde W =: \widehat{B_W} \subseteq \Pic0 Y$.
Hence
\[
\varphi (\widehat{B_V}) = \widehat{B_Y} \simeq \widehat{B_W}.
\]
At this point, if we apply $\mu_{\Phi^{-1}}$ to $h$ where $\Phi^{-1}$ is a quasi-inverse of $\Phi$, 
we get $$\mu_{\Phi^{-1}}(h) = \mu_{\Phi^{-1}}(\mu_{\Phi}(f)) = s_X$$ thanks to the functoriality of the Rouquier isomorphism. 
By Lemma \ref{lemmaX10} $f$ and $s_X$ are equivalent fibrations of $X$, so
$\mu_{\Phi^{-1}} \circ \mu_{\Phi} = {\rm id}_{G_X}$ and,
since the role of $X$ and $Y$ can be symmetrically exchanged, we also get $\mu_{\Phi} \circ \mu_{\Phi^{-1}}  = {\rm id}_{G_Y}
$ by the same reasoning.
 
\begin{remark}\label{lastrmk}
Let us restrict ourselves  
to irregular fibrations $f \colon X \to V$ onto varieties $V$  admitting a  morphism $c_V \colon V \to \Alb \widetilde V$  finite onto its image and such that the composition $\widetilde V \xrightarrow{\rho} V \xrightarrow{c_V} \Alb \widetilde V$ equals the Albanese map of a desingularization $\widetilde V$.\footnote{This is precisely what happens if $\dim V = 1$.} In this case two 
fibrations $f\colon X \to V$ and $f' \colon X \to V'$ are equivalent if there exists an isomorphism $\sigma \colon V \to V'$ such 
that $f' = \sigma \circ f$. Then the   bijection of Theorem \ref{thm:corr} is base-preserving in a stronger sense: namely, $V$ is isomorphic to $W$. In fact, there exists an isomorphism $\sigma \colon V \xrightarrow{\sim} X'$ such that $s_X = \sigma \circ f$ (see \cite[Lemma 19]{lombardi:fibrations}).    
\end{remark}

\subsection{Proof of Proposition \ref{Propfibrintro}}
Once the $K$-equivalence among $X$ and $Y$ has been proved by Theorem \ref{Thmfibrintro} (iii),  Proposition \ref{Propfibrintro} follows at once by standard arguments (see \cite[p.\ 304]{lombardipopa:conj}).  Namely,
if the  rational map 
$\psi \colon Y  \dashrightarrow X$ induced by the $K$-equivalence is not a morphism, there exists a curve 
$C \subseteq Z$ that is contracted by $\pi_Y$ but not by $\pi_X$ (see \eqref{relK00}). So $(\pi_X^* \omega_X \cdot C) = (\pi_Y^* \omega_Y \cdot C) = 0$.
On the other hand, by  Lemma \ref{lemmaX10} and the above Remark \ref{lastrmk}, we see that 
 $\omega_X$ is $f$-(anti)ample if and only if it is $s_X$-(anti)ample, and, since 
 $\pi_X(C)$ is contained in a fiber of $s_X$, one gets $(\omega_X \cdot \pi_X(C) ) \neq 0$. 
This gives a contradiction. 
Hence $\psi$ is a crepant birational morphism between smooth projective varieties, hence an 
 isomorphism.

\subsection{Proof of Theorem \ref{thmnotgentype}}

For the proof of Theorem \ref{thmnotgentype} we apply the main  result of \cite{chen+hacon:pluricanonical}.
Let $f \colon X \to V$ be an irregular fibration of $X$. Since by definition $V$ is of  maximal Albanese dimension, one has that $\mathrm{kod}(V) \geq 0$. Then it makes sense to consider the Iitaka fibration  of $V$, which by definition is the Iitaka fibration of a non-singular model of $V$.
So we get the following commutative diagram
\begin{equation}\label{fincomm}
\xymatrix{ \widetilde X \ar[r]^-{\rho}  \ar[d]^-{\widetilde f} &X \ar[r]^-{a_X} &\Alb X \ar[d]^-{\pi} \\
\widetilde V \ar[rr]^-{a_{\widetilde V}} \ar[d]^-{g} & &B_V = \Alb \widetilde V \ar[d]^-{\kappa} \\
Z_V \ar[rr]^-{a_{Z_V}} & &\Alb Z_V 
}
\end{equation}
where $Z_V$ is a smooth projective variety of dimension $\dim Z_V = \mathrm{kod}(V)$, and $\kappa$ is the fibration 
between  Albanese varieties induced by the Iitaka fibration $g$ of $V$.

By \cite[Theorem 2.3]{chen+hacon:pluricanonical}, the abelian variety 
$\widehat{\kappa}(\Pic0 Z_V)$ is contained in  the abelian subvariety of $\Pic0 \widetilde V$ generated by $V^0(\widetilde V, \omega_{\widetilde V})$.\footnote{For the sake of clarity, let us say that we are applying \cite[Theorem 2.3]{chen+hacon:pluricanonical} to the generically finite morphism $a_{\widetilde V}$, and the variety $\Pic0 S$ in \emph{loc.\! cit.\!} coincides with our $\widehat{\kappa}(\Pic0 Z_V)$.} 
Since in the proof of Lemma \ref{lemmagentypeRunv} we  observed that
${\rho}_* \widetilde{f}^* V^0(\widetilde V, \omega_{\widetilde V})$ is contained in a subgroup of $\Pic0 X$ whose connected component through the origin is a Rouquier-stable subvariety,
it follows  from the commutativity of \eqref{fincomm}  that 
$\widehat{\pi} \big( \widehat{\kappa}(\Pic0 Z_V) \big)$ is a Rouquier-stable subvariety of $\Pic0 X$.  
Hence, by taking the Stein factorization of the morphism induced by $\varphi \big(\widehat{\pi} \big(\widehat{\kappa}(\Pic0 Z_V) \big)  \big) 
\subseteq \Pic0 Y$, we obtain a 
 fibration $h \colon Y \to W$.
 Since the base of the Stein factorization of the composition $\kappa \circ \pi \circ a_X$ is equal to the base of the Stein factorization of $a_{Z_V}$, which 
 is generically finite onto its image \cite[Proposition 2.1(a)]{haconpardini:albanese}, we see that $W$ is birational to $Z_V$ by Theorem \ref{step1}(i).

\bibliographystyle{amsalpha}
\bibliography{bibl}

\end{document}